\documentclass[nonblindrev]{informs3} 
\renewcommand{\theARTICLETOP}{}
\OneAndAHalfSpacedXI 

\usepackage{amsfonts,amscd,epsfig}
\usepackage{amsmath}
\usepackage{mathtools}
\usepackage[colorlinks = true,
            linkcolor = black,
            urlcolor  = black,
            citecolor = black,
            anchorcolor = blue]{hyperref}
\usepackage{amsfonts,amscd,epsfig}
\usepackage{graphicx}
\usepackage{natbib}
\usepackage[small, margin=1cm]{caption}
\usepackage{subfigure}
\usepackage{bbm}
\usepackage{tabularx}
\usepackage[linesnumbered,ruled]{algorithm2e}
\usepackage{bm}
\usepackage{diagbox}
\usepackage{booktabs}
\usepackage{multirow}
\usepackage{geometry}
\usepackage{ltxtable}
\usepackage{hyperref}
\usepackage{booktabs,caption}
\usepackage{flafter} 
\usepackage{algpseudocode}
\usepackage{bbm}
\usepackage[flushleft]{threeparttable} 

\usepackage{siunitx}
\usepackage{cleveref}
\usepackage{makecell}

\renewcommand{\emptyset}{\varnothing}

\newcommand{\bY}{\mathcal{Y}}
\newcommand{\Ex}{\mathbb{E}}

\newcommand{\R}{\mathbb{R}}

\allowdisplaybreaks

\usepackage{natbib}
\bibpunct[, ]{(}{)}{,}{a}{}{,}%
\def\bibfont{\small}%

\usepackage{rotating}
\usepackage{fancyvrb}

\TheoremsNumberedThrough     
\ECRepeatTheorems

\EquationsNumberedThrough    

\begin{document}
	
	
	\RUNAUTHOR{Zhu et al.}
	
	\RUNTITLE{Huge-Scale Assortment Optimization with Customer Choice: A Parallel Primal-Dual Approach}
	
	\TITLE{Huge-Scale Assortment Optimization with Customer Choice: A Parallel Primal-Dual Approach}
	\ARTICLEAUTHORS{
       \AUTHOR{Donghao Zhu}
		\AFF{The Institute of Statistical Mathematics, Tokyo, Japan 190-0014, \\
        The University of Tokyo Market Design Center, The University of Tokyo, Tokyo, Japan 113-0033, \\
        \EMAIL{zhu@ism.ac.jp}}
        \AUTHOR{Hanzhang Qin}
        \AFF{
        Institute of Operations Research and Analytics, National University of Singapore, Singapore 117602,\\
        Department of Industrial Systems Engineering and Management, National University of Singapore, Singapore 117576,
        \EMAIL{hzqin@nus.edu.sg}}
         \AUTHOR{Ching-pei Lee}
		\AFF{
        The Institute of Statistical Mathematics, Tokyo, Japan 190-0014, \EMAIL{chingpei@ism.ac.jp}}
         \AUTHOR{Yuki Saito, Takahiro Kawashima}
		\AFF{
        ZOZO Next, Inc., Tokyo, Japan 263-0023,  \EMAIL{yuki.saito@zozo.com}, \EMAIL{takahiro.kawashima@zozo.com}}
         \AUTHOR{Kenji Fukumizu}
		\AFF{
        The Institute of Statistical Mathematics, Tokyo, Japan 190-0014, \EMAIL{fukumizu@ism.ac.jp}}
	} 
	
\ABSTRACT{We study huge-scale assortment optimization problems to maximize expected revenue under customer choice, addressing a fundamental challenge in industries such as transportation, retail, and healthcare.
The choice-based linear programming (CBLP) formulation provides a powerful framework for optimizing sales allocations across customer segments, yet traditional approaches often fail to solve CBLPs of huge scale (involving millions of customer choices) due to the lack of algorithmic designs that exploit problem structure.
To overcome this computational bottleneck, we propose a first-order primal–dual method, \texttt{SPFOM}, which requires only a small computational cost per iteration, achieves a provably near-optimal convergence rate, and can be readily extended to parallel computing environments.
Computational experiments demonstrate the computational and practical superiority of \texttt{SPFOM} over state-of-the-art solvers for large-scale linear programs.
The framework is extended to a multi-period assortment optimization setting with inventory constraints, where \texttt{SPFOM} estimates global shadow prices that enhance classical bid-price control policies compared with benchmark methods such as market segment decomposition.
Numerical experiments and a case study using real-world data from the ZOZOTOWN platform validate the practical effectiveness of \texttt{SPFOM}, highlighting its advantages in improving revenue performance while maintaining balanced inventory levels.}

\KEYWORDS{Revenue management, Generalized attraction model, Discrete choice model, Primal-dual method, Large-scale optimization}

	
	\maketitle
    
\section{Introduction}

In the dynamic landscape of modern digital markets, effective revenue management (RM) is crucial for businesses to thrive.
This paper investigates the computational challenges arising in huge-scale assortment optimization problems (i.e., extremely large-scale problems involving millions of customer choices and high-dimensional decision spaces) that aim to maximize expected revenue, with a particular focus on formulations based on choice-based linear programs (CBLPs) under the generalized attraction model (GAM).
A central question in revenue management is how to incorporate demand dependencies into forecasting and optimization models. 
Specifically, suppliers must anticipate how consumer demand redistributes when the set of available products changes, since such substitution effects fundamentally determine the revenue-maximizing assortment. 
Modeling these interactions is critical for effective decision-making in large, dynamic markets where customer choices evolve in response to assortment updates and price adjustments.
This modeling framework offers a substantial improvement over the traditional independent demand model (IDM), which relies on the simplifying assumption that the demand for each product is independent of the availability of other products. 
While the IDM allows for tractable computation, it fails to capture the cross-product substitution effects that are central to realistic demand systems.

In this paper, we focus on the discrete choice model and the associated optimization formulation developed by \citet{gallego2015general}.
Specifically, the GAM provides a closed-form representation of customer demand for any given choice set, capturing substitution effects across products.
The parameters of the GAM can be efficiently estimated from abundant transaction data \citep{gallego2015general}, and are assumed to be given in this study.
Under the GAM framework, the resulting revenue management problem can be formulated as a choice-based linear program (CBLP), which can be equivalently transformed into a sales-based linear program (SBLP) that is more amenable to large-scale computation and algorithmic analysis.

In modern digital markets, assortment optimization problems often involve an extraordinarily large number of decision variables, reflecting the complexity of coordinating millions of products, heterogeneous customer segments, and dynamic demand patterns.
For instance, in the fashion e-commerce industry, platforms such as ZOZOTOWN, one of Japan’s largest online fashion retailers, handle tens of millions of product–customer interactions across thousands of brands and millions of active users daily~\citep{saito2020open}.
Similarly, Amazon recorded over 3 billion monthly online visits in 2023, illustrating the massive scale of customer activity in global digital markets~\citep{amazon2024sell}.
Managing such large-scale systems presents formidable computational challenges, as even basic full-dimensional vector operations become prohibitively expensive at realistic market scales.
A common practical approach to mitigate this challenge is to decompose the problem by partitioning the product space into smaller, more manageable subsets (e.g., market segments), and making assortment or selling decisions separately for each subset~\citep{aouad2023market}.
However, while such decomposition strategies reduce the dimensionality of the product space, the number of customers and potential choice interactions entering the system typically remains prohibitively large.
These characteristics make the development of computationally scalable optimization methods indispensable for practical large-scale assortment optimization.
Hence, our central research question is:
\textit{
How can we design a computationally efficient method to solve assortment optimization problems when the number of customers is prohibitively large?
}
To answer this question, the primary objective of this paper is to develop a computationally scalable framework that enables firms to determine more effective resource allocation and assortment (or product recommendation) strategies for large-scale digital markets, while maintaining minimal computational overhead.

\subsection{Our Contributions}
The core technical innovation of our work lies in the development of a \emph{novel primal–dual first-order method}, termed the Sales-based Primal-dual First-Order Method (\texttt{SPFOM}), which efficiently exploits the structured coupling inherent in huge-scale assortment optimization problems and ensures rapid convergence to the optimal solution of the SBLP.
The optimal solution of the SBLP can be equivalently transformed into that of the CBLP, thereby linking the sales-based and choice-based perspectives in a manner consistent with practical modeling considerations.
We establish a global linear contraction in expected squared distance to the unique smoothed optimum, with an explicit rate that depends on the market size, batch size, and the spectral properties of the structured coupling matrix.
This form of convergence is not implied by existing LP theory papers: these papers usually apply to deterministic full-information iterations and control residuals for the original LP, whereas our result quantifies how sampling noise, problem scale, and smoothing jointly determine contraction for a market-structured problem.

Moreover, from a computational perspective, \texttt{SPFOM} is inherently parallelizable, enabling substantial acceleration in distributed computing environments using GPUs. 
Our algorithm contributes to the growing literature on \emph{GPU-based linear programming}, and our approach is more structured and scalable than general-purpose GPU-based LP methods, as it explicitly exploits the problem structure arising in huge-scale assortment optimization, as demonstrated in our numerical experiments.
Our framework can also be extended to \emph{multi-period assortment optimization} settings with inventory considerations, allowing it to adapt effectively to dynamically evolving retail environments.

The global shadow prices estimated by \texttt{SPFOM} can be directly incorporated into classical revenue management heuristics, such as bid-price control.
In contrast to conventional approaches that repeatedly compute local shadow prices for smaller product subsets or market segments, \texttt{SPFOM} derives global shadow prices that more accurately capture product-level marginal profits, substitution effects, and future value potentials.
This capability enables the system to avoid prematurely recommending products with higher long-term value, thereby improving both revenue performance and inventory balance over multiple periods.
This framework can also be extended to network revenue management problems with bundle selling, in which bundle-level decisions interact through shared resource capacities.
Using a \emph{real-world dataset} from ZOZOTOWN, one of Japan’s largest online retail platforms, we demonstrate the scalability and empirical effectiveness of the proposed method in large-scale, dynamically evolving assortment and recommendation environments.

\subsection{Organization of the Paper}
Section~\ref{sec2:relate} reviews the related literature.
Section~\ref{sec3:model} introduces the model setup and problem formulations.
Section~\ref{sec4:mothodology} presents our methodological framework and theoretical analysis, together with numerical validation and discussion of parallel implementation.
Section~\ref{sec5:multiNRM} extends the framework to a multi-period assortment optimization setting and shows how the resulting shadow prices improve bid-price control.
Section~\ref{sec6:case} reports a real-world case study using data from ZOZOTOWN and evaluates computational performance across large-scale and multi-period settings.
Section~\ref{sec7:conclusion} concludes and outlines directions for future research.

\section{Related Work}\label{sec2:relate}

A choice-based deterministic linear program (CBLP) is a mathematical model for maximizing revenue through optimal inventory allocation across market segments while explicitly accounting for customer choice behavior \citep{liu2008choice}, and serves as a fundamental formulation for assortment optimization problems under customer choice.
Unlike traditional models that treat product demands as independent, the CBLP incorporates the sale probability of each product within a market segment, capturing interactions among available alternatives and customer preferences.
Its objective is to determine the optimal assortment of products and the fraction of time each assortment should be offered, thereby maximizing expected revenue over the planning horizon.
This deterministic linear programming formulation provides an upper bound on achievable revenue while respecting inventory constraints and capturing customer choice dynamics.
For a comprehensive overview of recent studies on choice-based revenue management and assortment optimization, readers are referred to the surveys by \citet{feng2022consumer, heger2024assortment}.

Extensive research has studied revenue management from both modeling and computational perspectives; for a comprehensive review, readers are referred to \citet{strauss2018review}.
Much of this literature focuses on developing rich optimization formulations to capture demand uncertainty, customer choice behavior, and inventory constraints, as well as tractable solution methods for the resulting models.
Within this line of work, \citet{gallego2015general} introduced the sales-based linear program (SBLP), which is equivalent to the choice-based linear program (CBLP) but avoids the exponential number of variables inherent in the latter.
Relatedly, \citet{tong2014approximate}, \citet{vossen2015reductions}, and \citet{kunnumkal2019choice} proposed tractable approximations to choice-based linear programs under both independent and discrete choice demand models, while \citet{zhang2022product} developed a product-based separable piecewise linear approximation that yields tighter relaxations.
Despite these advances, existing approaches are not directly applicable to the huge-scale regimes we consider, those involving extremely large customer populations and product spaces, as they typically lead to formulations whose computational complexity grows rapidly with problem size.
In contrast, our work focuses on the development of computationally scalable algorithms for large-scale assortment optimization under customer choice, with an emphasis on methods that scale linearly with the number of customers within the CBLP/SBLP framework.

Naturally, our approach belongs to the class of first-order methods for solving large-scale linear programs (LPs).
To date, the most widely used LP solvers have been based on either the simplex method or the interior-point method \citep{HansRanking}.
Both approaches rely heavily on solving linear systems through matrix factorization, which makes them unsuitable for extremely large-scale problems, as Hessian computation and factorization can become prohibitively expensive. Recent studies have demonstrated that first-order methods can generate highly accurate LP solutions in a relatively short time \citep{applegate2021practical}.
By employing first-order techniques, the computational bottlenecks inherent in simplex and interior-point methods can be overcome, as the primary operation reduces to gradient evaluation rather than Hessian computation. 
This shift enables the efficient generation of high-accuracy solutions even at massive problem scales. 
There is a recent trend of studying how to efficiently incorporate GPUs in first-order methods, with a special focus on LP and conic programming solvers (\citealt{lu2025cupdlp, lu2025practical, lin2025pdcs}). 
Our method differs from these general-purpose approaches in that it explicitly exploits the problem structure arising in huge-scale assortment optimization under customer choice, leading to substantially improved computational efficiency compared with generic methods.

Recent efforts have been made to establish the theoretical validity of first-order methods for solving general huge-scale LPs, including the works of \citet{nesterov2008rounding}, \citet{nesterov2012efficiency}, and \citet{lu2021nearly}.
\citet{guo2025solving} proposed ML-based first-order methods for directly optimizing assortments under complex choice models, with a focus on scalability and benchmark performance.
Our approach differs from these studies in that we aim to develop a computationally tractable first-order method tailored to LPs with a specific structure.
Prior research has shown that for LPs possessing certain structural properties, specialized first-order algorithms can achieve faster convergence rates than general-purpose LP solvers, for example, nearly linear-time solvers for Packing LPs \citep{allen2019nearly} and efficient solvers for Optimal Transport LPs \citep{mai2021fast, liao2022fast}.
Nevertheless, our approach again departs from these directions, as it is straightforward to verify that neither the CBLP nor the SBLP falls into the category of Packing or Optimal Transport LPs.

\section{Model Setup and Problem Formulation}\label{sec3:model}

We introduce a stochastic assortment optimization formulation under customer choice.
Let $[k]$ denote the set $\{1, .., k\}$ for any positive integer $k$.
On the demand side, the positive integer $n$ denotes the number of customer types, assumed to be \textit{very large}. 
For each customer type $i \in [n]$, where each type corresponds to an individual customer, we assume a homogeneous arrival rate $\lambda_i=1$.
On the supply side, the positive integer $m$ denotes the number of product types, where $m \ll n$.
For each product type $j \in [m]$, the positive real number $c_j$ denotes the capacity, representing the maximum quantity available for purchase. The corresponding price is given by the positive real number $r_j$.

The set of all potential assortments indicating a customer's purchasing interest is denoted by $\mathcal{S} := 2^{[m]}$.
Each customer, given an assortment $S$, will select product $j\in S$ with probability $\pi_{ij}(S)$. 
Let the consideration sets be distinct across customer types, i.e., customer of type $i$ may choose from a set $\mathcal{S}_i$.
Since the analysis and algorithm remain unchanged, all sets are assumed equal, $\mathcal{S}_i=\mathcal{S}$, for notational simplicity.
For $i\notin S$, $\pi_{ij}(S)=0$. 
Then, upon each customer's arrival, an assortment is made to the customer if a purchase is made for product $j$, revenue $r_j$ is incurred and 1 unit of the inventory for product $j$ is consumed. The RM problem is then how to decide the offering of assortment to all customer arrivals to optimize the expected cumulative revenue. 

Again, for notational simplicity, we adopt the multinomial logit model (MNL) to model the customer choice. Between each customer type $i \in [n]$ and each product type $j \in [m]$, the real number $w_{ij}$ denotes the preference weight. For each customer $i$, the real number $w_{i0}$ denotes the preference that the customer does not want to buy any product. Then, the MNL model specifies
\[
\pi_{ij}(S) = \frac{w_{ij}}{w_{i0}+\sum\limits_{j'\in S } w_{ij'} }.
\]

The MNL model belongs to the basic attraction model (BAM) model \citep{luce2012individual}, but our algorithm design and analysis can easily extend to the case where the choice model is a GAM. 
In that case, besides the nominal attraction values $\{w_{ij}\}$, we use shadow attraction values $\{v_{ij}\}$ to estimate the effect of items that are not in the assortment on customer choice. Namely, the choice probability of customer $i$ for product $j$ becomes
\[
\pi_{ij}'(S) = \frac{w_{ij}}{w_{i0}+\sum\limits_{j'\in S } w_{ij'}+ \sum\limits_{j'\notin S } v_{ij'} }.
\]
Since the equivalence between the CBLP and the SBLP holds not only for the MNL model but also for the broader GAM class \citep{gallego2015general}, the algorithmic design and theoretical guarantees presented in this work naturally extend to all choice models covered by GAM.

\subsection{CBLP}
The choice-based model refers to a decision-making model in which individuals make choices from a set of available alternatives, and it is well known that a deterministic LP formulation provides an upper bound for the RM objective \citep{talluri2004revenue}. 
Our primary objective is to solve the following choice-based LP, defined as:
{
\begin{align}
\max_{\substack{x_i(\mathcal{S}) \\ i\in [n],\, S\in \mathcal{S}}} &\sum_{i\in [n]}\sum_{S\in \mathcal{S}}x_i(S) \cdot \sum_{j\in S} \frac{r_j w_{ij}}{w_{i0}+\sum_{j'\in S} w_{ij'}} \label{eq:choice-based_obj} \\[4pt]
\text{s.t.}\quad
    & \sum_{i\in [n]}\sum_{S\in \mathcal{S}}x_i(S)\mathbf{1}(j \in S) \frac{ w_{ij}}{w_{i0}+\sum_{j' \in S}w_{ij'}} \le c_j, && \forall j\in [m] \label{eq:choice-based_1} \\[2pt] 
    & \sum_{S\in \mathcal{S}}x_i(S) = \lambda_i, && \forall i\in [n] \label{eq:choice-based_2} \\[2pt]
    & x_i(S)\ge 0, && \forall i\in [n],~S\in \mathcal{S}. \label{eq:choice-based_3}
\end{align}
}
\noindent
Here, the function $x_i(S)$ represents the total time that customer $i$ is offered the set $S$ and is the decision variable.
The objective~\eqref{eq:choice-based_obj} maximizes the expected revenue by multiplying the product prices with the probabilities that customers desire to buy those products.
Constraint~\eqref{eq:choice-based_1} specifies that the total quantity of each product purchased by all customers should not exceed the capacity.
Constraint~\eqref{eq:choice-based_2} ensures that the number of customers purchasing products is equal to the arrival rate of each customer type.
Constraint~\eqref{eq:choice-based_3} ensures that customers may only purchase a positive number of products.
This choice-based model has an exponential number of variables due to the size of $\mathcal{S}$.

\subsection{SBLP}
\citet{gallego2015general} developed a sales-based LP by applying the primal-dual theorem to the CBLP (and showed CBLP and SBLP are equivalent), defined as follows:
{
\begin{align}
\max_{\substack{y_{ij} \\ i\in [n] \\ j\in\{0\}\cup[m]}}
    & \sum_{i\in [n]}\sum_{j\in [m]} y_{ij} r_j \label{eq:sales-based_obj} \\[4pt]
\text{s.t.}\quad
    & \sum_{i\in [n]} y_{ij} \le c_j, && \forall j\in [m] \label{eq:sales-based_1} \\[2pt]
    & \sum_{j=0}^m y_{ij} = \lambda_i, && \forall i\in [n] \label{eq:sales-based_2} \\[2pt]
    & \frac{y_{ij}}{w_{ij}} \le \frac{y_{i0}}{w_{i0}}, && \forall i\in [n],~\forall j\in [m] \label{eq:sales-based_3} \\[2pt]
    & y_{ij} \ge 0, && \forall i\in [n],~\forall j\in [m]. \label{eq:sales-based_4}
\end{align}
}
\noindent
Here, the positive real variable $y_{ij}$ denotes the quantity desired by customer $i \in [n]$ for product $j \in [m]$. 
The positive real variable $y_{i0}$ denotes the quantity that customer $i \in [n]$ does not wish to purchase any product. The objective~\eqref{eq:sales-based_obj} maximizes the expected revenue by multiplying the product prices with the quantities that customers desire to purchase.
Constraint \eqref{eq:sales-based_1} delineates the capacity, ensuring that the quantities of products that customers purchase do not exceed their respective capacities. 
Constraint \eqref{eq:sales-based_2} delineates the balance, ensuring that the number of customers of the same type, regardless of whether they purchase a product, equals the corresponding arrival rate.
A \textit{demand} is defined as the ratio of the quantity of products purchased and customer preferences~\citep{gallego2015general}.
Constraint~\eqref{eq:sales-based_3} compares the demand for purchase and no-purchase behaviors.
Constraint~\eqref{eq:sales-based_4} ensures that customers only purchase a positive number of products.
For further details on the derivation process of the sales-based LP, interested readers are referred to~\citet{gallego2015general}.

The SBLP is equivalent to the CBLP, but solving the CBLP is computationally challenging due to the exponential number of decision variables. 
Solving the SBLP is easier for this reason. 
Additionally, the formulation of the SBLP is similar to the Optimal Transport (OT) problem~\citep{cuturi2013sinkhorn, peyre2019computational}, but with an additional constraint~\eqref{eq:sales-based_3}. 
Therefore, solving the SBLP is technically more challenging than solving a general OT problem. 
Our goal is to tackle a huge-scale SBLP, where the parameter $n$ is much larger than $m$.

\section{Methodology Framework and Analysis}\label{sec4:mothodology}

Our methodological framework adopts the BAM model to capture customer choice behavior in an assortment optimization setting, from which the CBLP is derived to compute the optimal product recommendation scheme, as illustrated in Figure~\ref{Fig:illus_1}.
\begin{figure}[tbp]
\caption{\raggedright Framework of our modeling and algorithmic approach. }
    \centering
\includegraphics[width=120mm]{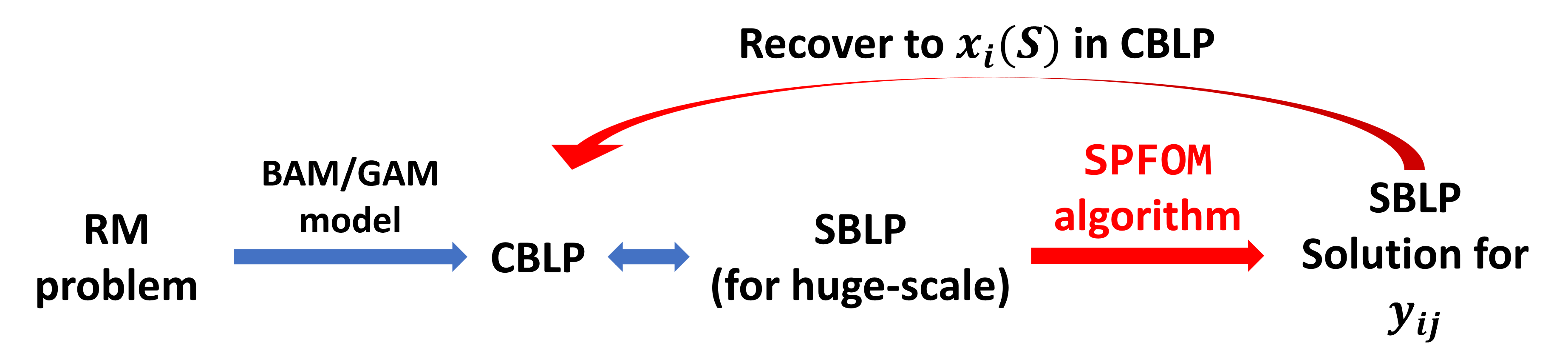}
\label{Fig:illus_1}
\end{figure}
To address the exponential growth of variables in the CBLP, an equivalent formulation, termed the SBLP, is identified, making large-scale optimization computationally tractable.
In this section, we present our method for solving the SBLP, which can be viewed as a primal-dual first-order algorithm with sparse primal updates, referred to as \texttt{SPFOM}.
The solution to the SBLP obtained by \texttt{SPFOM} can be equivalently transformed into the solution of the CBLP.
The potential for parallel implementation within this framework is discussed.
The complete structure of the \texttt{SPFOM} algorithm and its parallelized version are provided in Algorithms~\ref{alg:sda_non} and~\ref{alg:sda}.

\subsection{The Primal-Dual Approach}
The constraint~\eqref{eq:sales-based_1} is incorporated into the objective function~\eqref{eq:sales-based_obj} to obtain:
{
\begin{align}
\min_{\substack{\eta_j \\ j\in [m]}} 
    \max_{\substack{y_{ij} \\ i\in [n] \\ j\in \{0\}\cup[m]}}
    & \sum_{i\in [n]}\sum_{j\in [m]} y_{ij} r_j + \sum_{j\in [m]}\eta_j \!\left(c_j - \sum_{i\in [n]} y_{ij}\right)
      \label{entro_obj} \\[4pt]
\text{s.t.}\quad
    & \sum_{j=0}^m y_{ij} = \lambda_i, && \forall i\in [n] \label{entro_cons_1} \\[2pt]
    & \frac{y_{ij}}{w_{ij}} \le \frac{y_{i0}}{w_{i0}}, && \forall i\in [n],~\forall j\in [m] \label{entro_cons_2} \\[2pt]
    & y_{ij} \ge 0, && \forall i\in [n],~\forall j\in [m] \label{entro_cons_3} \\[2pt]
    & \eta_j \ge 0, && \forall j\in [m]. \label{entro_cons_4}
\end{align}
}
\noindent
This formulation makes fast and sparse gradient updates on $i$.
Our primary focus is on fast updates for the inner optimization problem (recall that $n$ is assumed to be very large, so even scanning an $n$-dimensional vector can be computationally costly), as the dual updates can be executed using a standard dual ascent approach.
This assumption relies on the premise that $m$ is not excessively large.
Namely, the inner problem for customer $i$ is:
{
\begin{align*}
z_i := 
\max_{\substack{y_{ij} \\ j\in [m]}}
    & \sum_{j\in [m]} y_{ij}(r_j - \eta_j) \\[4pt]
\text{s.t.}\quad
    & \sum_{j=0}^m y_{ij} = \lambda_i, \\[2pt]
    & \frac{y_{ij}}{w_{ij}} \le \frac{y_{i0}}{w_{i0}}, && \forall j\in [m] \\[2pt]
    & y_{ij} \ge 0, && \forall j\in [m].
\end{align*}
}
\noindent
A key observation regarding the structure of the inner optimization problem is, a greedy approach in $O(m)$ is sufficient to solve the problem exactly when the value of $y_{i0}$ is given.

\begin{algorithm}
\OneAndAHalfSpacedXI
\caption{\texttt{FAST-INNER}}\label{alg:fast inner}
\textbf{Input:} Customer index $i$ and non-purchase quantity $y_{i0}$\\
\textbf{Initialization:} Set $\bar \lambda_i \leftarrow \lambda_i$, $t\leftarrow 1$, $\forall~j=1,\ldots,m:y_{ij}\leftarrow 0$\\
\textbf{Sorting:} Sort $(r_j-\eta_j)$ in descending order and denote $y_{i[j]}$ as the variable that has the $j$th largest coefficient $(r_{[j]}-\eta_{[j]})$.\\
\textbf{While} $\bar \lambda_i > 0$:\\
\quad Set $y_{i[t]} = \min\left\{ \frac{w_{i[t]}y_{i0}}{w_{i0}},\bar \lambda_i \right\}$\\
\quad $\bar \lambda_i \leftarrow \bar \lambda_i - y_{i[t]}$\\
\quad $t \leftarrow t + 1$\\
\textbf{End while} \\
\textbf{Output:} $\{y_{ij}\}$ and $z_i = \sum\limits_{j\in [m]}y_{ij}(r_j-\eta_j)$.
\end{algorithm}

\begin{lemma}\label{lemma_1}
    When $y_{i0}$ is fixed, $z_i$ can be computed by a greedy algorithm (Algorithm \ref{alg:fast inner}) with computational time $O(m)$.
\end{lemma}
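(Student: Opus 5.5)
The plan is to recognize that, once $y_{i0}$ is fixed, the inner problem defining $z_i$ is a continuous (fractional) knapsack problem with unit weights and box constraints, for which the greedy rule is optimal by an exchange argument. Write $a_j := r_j-\eta_j$, $u_j := w_{ij}y_{i0}/w_{i0}$, and let $B$ be the residual budget to be distributed over the purchase variables. By \eqref{entro_cons_1} this budget is $B=\lambda_i-y_{i0}$. Algorithm~\ref{alg:fast inner} initializes $\bar\lambda_i\leftarrow\lambda_i$, and I would read this as the budget net of the fixed $y_{i0}$, noting the discrepancy explicitly. With $y_{i0}$ fixed, constraints \eqref{entro_cons_2}--\eqref{entro_cons_3} become $0\le y_{ij}\le u_j$. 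The problem is therefore
\[
\max\Big\{\textstyle\sum_{j\in[m]} a_j y_{ij} \;:\; \sum_{j\in[m]} y_{ij}=B,\ 0\le y_{ij}\le u_j\Big\}.
\]
It is feasible if and only if $0\le B\le \sum_j u_j$, and I would state this as a standing condition on the admissible $y_{i0}$.

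\textbf{Step 1 (optimality of greedy).} Order the indices so that $a_{[1]}\ge a_{[2]}\ge\cdots\ge a_{[m]}$. The greedy solution fills $y_{i[1]},y_{i[2]},\dots$ up to their caps $u_{[t]}$ until the budget is exhausted, so it has the form $(u_{[1]},\dots,u_{[k-1]},\theta,0,\dots,0)$ with $0\le\theta\le u_{[k]}$. Take any feasible $y$ that differs from the greedy solution $y^g$. Then there exist $s<t$ (in sorted order) with $y_{i[s]}<y^g_{i[s]}\le u_{[s]}$ and $y_{i[t]}>y^g_{i[t]}\ge 0$, because both vectors sum to $B$. Shift mass $\delta=\min\{u_{[s]}-y_{i[s]},\,y_{i[t]}\}>0$ from $[t]$ to $[s]$. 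This preserves feasibility and changes the objective by $\delta(a_{[s]}-a_{[t]})\ge 0$. Each shift reduces $\|y-y^g\|_1$, so repeating it reaches $y^g$ without ever decreasing the objective. Hence $y^g$ is optimal and $z_i=\sum_j a_j y^g_{ij}$.

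An alternative certificate is LP duality. Take the multiplier $\mu=a_{[k]}$ for the equality constraint and $\nu_j=(a_j-\mu)^+$ for the caps. Complementary slackness then holds for $y^g$.

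One point needs care. Because \eqref{entro_cons_1} is an equality, greedy must keep filling even when some $a_j<0$. The exchange argument covers this case without modification, since it never uses the sign of $a_j$.

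\textbf{Step 2 (running time).} After the ordering is available, the while loop performs at most $m$ iterations, each costing $O(1)$, and computing $z_i$ costs $O(m)$. The main obstacle is the sorting step, which naively costs $O(m\log m)$.

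I would handle this in one of two ways. The first is to observe that the order of $r_j-\eta_j$ does not depend on $i$. It can therefore be computed once per dual iterate $\eta$ and shared across all $n$ customers, so the per-customer cost is $O(m)$. The second, if a strictly per-call $O(m)$ bound is wanted, is to avoid sorting entirely. One uses linear-time selection (median of medians) to find the critical index $k$, as in the classical $O(m)$ fractional knapsack algorithm. Each round finds the median of the $a_j$, compares the total cap above it with $B$, and recurses on one half. This gives total work $O(m)$. Then every $j$ with $a_j>a_{[k]}$ is set to $u_j$, the critical item receives the remainder, and the rest are set to zero.

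I would present the first argument as the intended reading of the lemma and mention the second as the way to make the $O(m)$ claim unconditional.
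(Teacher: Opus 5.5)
Your proof is correct, but it proves greedy optimality by a different route than the paper. The paper takes the order $r_1-\eta_1\ge\cdots\ge r_m-\eta_m$ as given. It writes the inner problem as the recursion $V_j(s_j)=\max_{0\le y_{ij}\le\min\{w_{ij}y_{i0}/w_{i0},\,s_j\}}\{(r_j-\eta_j)y_{ij}+V_{j+1}(s_j-y_{ij})\}$ with $s_1=\lambda_i$. It then asserts, via an induction it does not carry out, that $V_j'\le r_j-\eta_j$. Hence budget passed downstream is never worth more than the current coefficient, and each $y_{ij}$ should be pushed to its cap in order. The $O(m)$ bound is left implicit in the ``without loss of generality'' ordering.

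You instead cast the problem as a unit-weight fractional knapsack. You certify greedy by exchange or, more cleanly, by explicit dual multipliers $a_{[k]}$ for the budget and $(a_j-a_{[k]})^+$ for the caps, together with complementary slackness. The paper's recursion mirrors the loop of \texttt{FAST-INNER} stage by stage. Your duality certificate is complete in a few lines and identifies $a_{[k]}$ as the shadow price of the budget.

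Your write-up also repairs three points the paper skips:
\begin{itemize}
\item the residual budget is $\lambda_i-y_{i0}$, not the $\lambda_i$ used in both \texttt{FAST-INNER} and $s_1$;
\item the loop is well defined only when $0\le\lambda_i-y_{i0}\le\sum_j w_{ij}y_{i0}/w_{i0}$;
\item sorting costs $O(m\log m)$, so a per-call $O(m)$ bound needs either a sort shared across customers (legitimate, since the order does not depend on $i$) or linear-time selection.
\end{itemize}

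One small repair to your exchange step. With $\delta=\min\{u_{[s]}-y_{i[s]},\,y_{i[t]}\}$, a shift at the critical index can overshoot $\theta$. A strictly decreasing $\ell_1$ distance does not by itself give finite termination. Taking $\delta=\min\{y^g_{i[s]}-y_{i[s]},\,y_{i[t]}-y^g_{i[t]}\}$ fixes at least one coordinate per exchange, so $y^g$ is reached in at most $m$ steps. Alternatively, simply rely on the duality certificate.
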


\noindent
The proof of Lemma~\ref{lemma_1} is provided in Online Appendix~\ref{Proof:Lemma1}.
In practice, a mini-batch of customers can be sampled, and each customer $i$'s inner optimization can be efficiently solved via a golden-section search over the value of $y_{i0}$.
This approach requires only $\tilde O(mB)$ time, where logarithmic factors are ignored and $B$ is the batch size. 
The Algorithm~\ref{alg:fast inner} solves the inner problem $z_i$ in this way.

After solving the inner problem, we perform the dual ascent update.
For each $j \in [m]$, $\eta_j$ is updated as follows:
{
\begin{equation*}
\eta_j \leftarrow \max\left\{ 0, \eta_j - \tau\left( c_j -  \sum_{i\in [n]}  y_{ij} \right)  \right\},
\end{equation*}  
}
where $\tau>0$ is a constant step size. 
The complete \texttt{SPFOM} algorithm is summarized in Algorithm~\ref{alg:sda_non}.

\begin{algorithm}[tbp]
\OneAndAHalfSpacedXI
\caption{\texttt{SPFOM}}\label{alg:sda_non}
\textbf{Input:} Step size $\tau$, number of iterations $T$, batch size $B$, initial values $\{y_{ij}^0\}$, $\{\eta_j^{0}\}$\\
\textbf{For} $t=1,2,\ldots,T$: \\
\quad Randomly generate a subset of customers $\mathcal{I} \subset [n]$ with size $B$\\
\quad Solve the inner optimization problem $z_i$ and update $y_{ij}^t$ for all $i \in \mathcal{I}$ and $j\in [m]$ through golden-section search and \texttt{FAST-INNER}\\
\quad Execute the projected dual ascent update:\\
\quad \quad \textbf{For} \( j \in [m] \):\\
\quad \quad \quad \( \eta_j^t \leftarrow \max \left\{ 0, \eta_j^{t-1} - \tau \left( c_j - \sum\limits_{i \in [n]} y_{ij} \right) \right\} \)\\
\quad \quad \textbf{End for}\\
\textbf{End for}\\
\textbf{Output:} $\{y_{ij}^T\}$, $\{\eta_j^{T}\}$ and the objective value.
\end{algorithm}

\noindent
The computational cost of each iteration of \texttt{SPFOM} is $\tilde O(mB)$ by design. 
This efficient primal-dual first-order method with sparse primal updates ensures that our approach remains computationally feasible even for huge-scale instances.

\subsection{Optimality Guarantees}
Convergence guarantees are provided for a slightly modified variant of the algorithm, whose solution is shown to be arbitrarily close to that of the original formulation in theory.
In particular, consider the case where a small regularization term, $(\mu/2)\sum\limits_{j\in[m]}\eta_j^2$, is added to the minimax formulation.
This is equivalent to solving the following penalized version of \eqref{eq:sales-based_obj}, where the constraint set for $y$ in the original minimax formula is denoted as $Y$.
{\small
\begin{align}
    \max\limits_{\substack {y_{ij}\in Y \\ i\in [n] \\ j\in\{0\} \cup [m]}}  & \sum_{i\in [n]}\sum_{j\in [m]}y_{ij}r_j + \frac{1}{\mu}\sum_{j: \sum\limits_{i \in [n]} y_{ij} > c_j} \left(\sum\limits_{i\in [n]}y_{ij}\ - c_j\right)^2 \label{eq:new_obj}.
\end{align}
}
Accordingly, the dual descent step in Algorithm~\ref{alg:sda_non} should be modified to
{\small
\begin{equation}\label{eq:stronglycvxupdate}
    \eta_j^t = \max\left\{ 0, (1 - \tau\mu) \eta_j^{t-1} - \tau\left( c_j -  \sum\limits_{i\in [n]}  y_{ij} \right)  \right\}.
\end{equation}
}

\noindent
The following theorem, from Theorem~17.1 of \citet{NocW06a}, shows that as $\mu$ approaches zero, the solution of \eqref{eq:new_obj} recovers the true solution of the original linear programming problem.
\begin{theorem}[Optimality Preservation]\label{Theo_1_old}
Let $\bar y_{\mu}$ be the optimal solution of \eqref{eq:new_obj}. For any sequence $\{\mu_i\}$ that approaches $0$, we have that all the limit points of $\bar y_{\mu_i}$ are optimal solutions to \eqref{eq:sales-based_obj}.
\end{theorem}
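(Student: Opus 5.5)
We will follow the classical quadratic-penalty argument behind Theorem~17.1 of \citet{NocW06a}, specialized to the structure of \eqref{eq:sales-based_obj}. Throughout, we read \eqref{eq:new_obj} as the penalized problem in which constraint violation is \emph{subtracted}. We write $f(y):=\sum_{i\in[n]}\sum_{j\in[m]} y_{ij} r_j$ and
\[
P(y):=\sum_{j\in[m]}\Bigl(\max\Bigl\{0,\textstyle\sum_{i\in[n]} y_{ij}-c_j\Bigr\}\Bigr)^2 .
\]
The problem is then $\max_{y\in Y}\, f(y)-\frac{1}{\mu}P(y)$; any positive constant in front of $P$, such as the factor $\tfrac12$ implicit in the $(\mu/2)\sum_j\eta_j^2$ regularization, does not affect the argument. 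The first step is to record the structural facts we need. The set $Y$ defined by \eqref{entro_cons_1}--\eqref{entro_cons_3} is a polytope. It is nonempty, since $y_{i0}=\lambda_i$ and $y_{ij}=0$ for $j\in[m]$ is feasible. It is also compact, since $0\le y_{ij}\le\lambda_i$ for every $j$; here $y_{i0}\ge 0$ follows from \eqref{entro_cons_2}. The functions $f$ and $P$ are continuous, and $P\ge 0$ with $P(y)=0$ exactly when \eqref{eq:sales-based_1} holds. By compactness the penalized problem attains its maximum, so $\bar y_\mu$ exists. The SBLP is feasible (take the same point above) and has an optimal solution $y^\ast$ with $P(y^\ast)=0$.

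The key comparison step tests the optimality of $\bar y_{\mu}$ against $y^\ast$. This gives
\[
f(\bar y_\mu)-\tfrac{1}{\mu}P(\bar y_\mu)\ \ge\ f(y^\ast)-\tfrac1\mu P(y^\ast)=f(y^\ast).
\]
Two consequences follow, and both should be extracted explicitly. First, $f(\bar y_\mu)\ge f(y^\ast)$, because $P\ge0$. Second, $P(\bar y_\mu)\le \mu\bigl(f(\bar y_\mu)-f(y^\ast)\bigr)\le \mu\,C$. Here $C:=\max_{y\in Y}f(y)-f(y^\ast)$ is finite by compactness; indeed $C\le \sum_i\lambda_i\max_j r_j$. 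Hence $P(\bar y_{\mu_i})\to0$ along any sequence $\mu_i\to0$.

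Finally, we pass to limit points. Let $\bar y$ be a limit point of $\{\bar y_{\mu_i}\}$; at least one exists by compactness of $Y$, and we restrict to a convergent subsequence. Since $Y$ is closed, $\bar y\in Y$. Continuity of $P$ gives $P(\bar y)=0$, so $\bar y$ satisfies \eqref{eq:sales-based_1} and is feasible for the SBLP. Continuity of $f$ together with $f(\bar y_{\mu_i})\ge f(y^\ast)$ gives $f(\bar y)\ge f(y^\ast)$, the optimal value. Hence $\bar y$ is optimal for \eqref{eq:sales-based_obj}.

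We do not expect a genuine obstacle. The only points requiring care are bookkeeping ones. One is fixing the sign convention of the penalty in \eqref{eq:new_obj}, so that the penalized problem is a genuine relaxation-with-penalty. The other is verifying boundedness of $Y$, which is what guarantees that limit points exist and that the constant $C$ is finite. Unlike the general statement in \citet{NocW06a}, this argument needs no exact minimization assumption beyond existence, which compactness supplies.
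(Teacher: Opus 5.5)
Your proof is correct. It rests on the same quadratic-penalty mechanism as the paper. The difference lies in what is proved and what is cited.

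The paper first converts the capacity inequalities into equalities with slacks $s_j\ge 0$. It then obtains \eqref{eq:new_obj} by eliminating $\eta$ from the $\mu$-regularized saddle problem through the first-order condition $\eta_j^\ast=(\sum_i y_{ij}+s_j-c_j)/\mu$. It observes that $1/\mu\to\infty$ plays the role of the penalty parameter, and finally invokes Theorem~17.1 of \citet{NocW06a} together with compactness of $\bar{\mathcal Y}$.

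You skip the saddle-point derivation, which the statement does not need because it is phrased directly in terms of \eqref{eq:new_obj}. Instead you carry out the comparison argument behind that theorem yourself, using the one-sided penalty $\max\{0,\sum_i y_{ij}-c_j\}^2$ on the compact polytope $Y$. This buys you several things:
\begin{itemize}
\item Your proof is self-contained and needs no slacks.
\item It treats the hard constraint $y\in Y$ explicitly. The cited theorem concerns unconstrained minimization of the penalty function of an equality-constrained problem, so the paper's appeal to it tacitly relies on the adaptation you perform.
\item It applies to any selection of maximizers. This matters because the penalized objective depends on $y$ only through the column sums $\sum_i y_{ij}$, so $\bar y_\mu$ is generally not unique.
\end{itemize}

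In return, the paper's derivation ties \eqref{eq:new_obj} to the regularized dual update \eqref{eq:stronglycvxupdate}. It also confirms your sign convention. Eliminating $\eta$ and optimizing out the slacks gives $\sum_{i,j} y_{ij} r_j-\frac{1}{2\mu}\sum_j \max\{0,\sum_i y_{ij}-c_j\}^2$, with the penalty subtracted. By contrast, \eqref{eq:new_obj} as displayed carries a plus sign, under which limit points would instead maximize the capacity violation over $Y$.
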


\noindent
Intuitively, this result shows that formulation \eqref{eq:new_obj} serves as an asymptotically exact approximation to the original LP.
As the barrier parameter $\mu$ approaches zero, any accumulation point of the optimal solutions $\bar{y}_\mu$ is an optimal solution of the original problem.
This guarantees that the proposed algorithm asymptotically preserves optimality.
The proofs of Theorem~\ref{Theo_1_old} is provided in Online Appendix~\ref{Proof:Theo_1_old}.

We now establish a linear convergence rate of the solutions of \texttt{SPFOM} for the
variant \eqref{eq:new_obj} of SBLP following the idea of Theorem~3.1
of \citet{du2019linear}.

\begin{theorem}[Linear Convergence]\label{Theo_2_simple}
Let $\{y^t\}_{t \ge 0}$ be the sequence generated by Algorithm \ref{alg:sda_non} with $\mu \leq 1/(2R)$, where $R = \max\limits_{i \in [n]} \bigg\{ \sum\limits_{j \in [m]} w_{ij} r_j \big/ \sum\limits_{j \in [m]} w_{ij} \bigg\}$.
Then, there exist constants $q \in (0, 1)$ and $C > 0$ such that the iterates converge to the unique optimal solution $\bar{y}_{\mu}$ at a linear rate:
$$\mathbb{E} [\| y^t - \bar{y}_{\mu} \|^2] \le C \cdot q^t, \quad \forall t \ge 1.$$
\end{theorem}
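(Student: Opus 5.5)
The plan is to follow Theorem~3.1 of \citet{du2019linear}. We treat the regularized problem as a strongly-convex--concave saddle point with bilinear coupling and analyze one iteration of the randomized primal--dual scheme as a contraction in a weighted squared distance to the saddle point $(\bar y_\mu,\bar\eta_\mu)$.

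\textbf{Step 1: structure and uniqueness of the saddle point.} Write the regularized minimax as
\[
\min_{\eta\ge 0}\max_{y\in Y} L_\mu(y,\eta)=\sum_{i}\ell_i(y_i)+\eta^\top\bigl(c-Ay\bigr)+\tfrac{\mu}{2}\|\eta\|^2 ,
\]
where $A$ is the $m\times n(m+1)$ coupling matrix summing $y_{ij}$ over $i$. It is block-separable across customers, so $Y=\prod_i Y_i$.

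The dual is $\mu$-strongly convex in $\eta$, so $\bar\eta_\mu$ is unique. To make $\bar y_\mu$ unique and obtain a contraction on the primal side, I use that the primal map $\eta\mapsto y_i(\eta)$ returned by \texttt{FAST-INNER} together with the golden-section search is single-valued and Lipschitz on the relevant region. The condition $\mu\le 1/(2R)$ enters here. Since $R$ bounds the attraction-weighted average revenue of each customer, the optimal prices satisfy $\|\bar\eta_\mu\|_\infty\lesssim R$. The condition then keeps the scaling $(1-\tau\mu)$ in \eqref{eq:stronglycvxupdate} and the induced primal response in a regime where the composite operator $\eta\mapsto \Pi_{\ge0}\bigl((1-\tau\mu)\eta-\tau(c-Ay(\eta))\bigr)$ is a strict contraction.

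\textbf{Step 2: one-step expected contraction.} Let $\mathcal I$ be the sampled batch. Each customer is refreshed with probability $B/n$, and stale blocks keep their previous values. Following \citet{du2019linear}, I define the Lyapunov function
\[
\Phi^t=\|\eta^t-\bar\eta_\mu\|^2+\alpha\,\tfrac{n}{B}\,\|y^t-\bar y_\mu\|^2 .
\]
The argument then has three parts.

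\emph{Dual part.} Nonexpansiveness of $\Pi_{\ge0}$ together with the optimality condition $\bar\eta_\mu=\Pi_{\ge0}\bigl((1-\tau\mu)\bar\eta_\mu-\tau(c-A\bar y_\mu)\bigr)$ gives
\[
\|\eta^t-\bar\eta_\mu\|^2\le (1-\tau\mu)^2\|\eta^{t-1}-\bar\eta_\mu\|^2+2\tau(1-\tau\mu)\langle \eta^{t-1}-\bar\eta_\mu, A(y-\bar y_\mu)\rangle+\tau^2\|A\|^2\|y-\bar y_\mu\|^2 .
\]

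\emph{Primal part.} In conditional expectation, $\mathbb E\|y^t-\bar y_\mu\|^2=(1-\tfrac Bn)\|y^{t-1}-\bar y_\mu\|^2+\tfrac Bn\|y(\eta^{t-1})-\bar y_\mu\|^2$. The last term is bounded by $L^2\|\eta^{t-1}-\bar\eta_\mu\|^2$ using the Lipschitz property from Step~1.

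\emph{Cancellation.} I choose $\alpha$ and a step size $\tau\lesssim \mu/\|A\|^2$ so that the cross term is absorbed by Young's inequality. This yields
\[
\mathbb E\,\Phi^t\le q\,\Phi^{t-1}, \qquad q=1-\min\Bigl\{c_1\tau\mu,\ c_2\tfrac{B}{n}\Bigr\}<1 .
\]
Here $\|A\|^2=\lambda_{\max}(AA^\top)=n$, since $AA^\top$ is $n$ times the identity on the product rows. This dependence on $n$, $B$ and the spectrum of $A$ is the one advertised in the introduction.

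\textbf{Step 3: conclusion.} Iterating and taking total expectations gives $\mathbb E\,\Phi^t\le q^t\Phi^0$. Since $\Phi^t\ge \alpha\tfrac nB\|y^t-\bar y_\mu\|^2$, we obtain $\mathbb E\|y^t-\bar y_\mu\|^2\le C q^t$ with $C=\tfrac{B}{\alpha n}\Phi^0$.

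\textbf{Main obstacle.} The hardest step is the primal-side Lipschitz/strong-concavity property in Step~1. The inner problem is a linear program in $y_i$, so the best response is generically discontinuous (greedy ties) and not strongly concave. The analysis therefore cannot be imported from \citet{du2019linear} verbatim. My first attempt would be to show that the quadratic penalty in \eqref{eq:new_obj} induces an effective strong concavity on the aggregate $Ay$, which is all that enters the dual update, and to work with the contraction in terms of $Ay$ and $\eta$ rather than the full $y$. If that fails, I would add an explicit infinitesimal proximal term on $y$ to restore uniqueness and then pass back using Theorem~\ref{Theo_1_old}-type limiting arguments.
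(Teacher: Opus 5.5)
Your Step~1 is a genuine gap, and sharper bookkeeping cannot close it. For customer $i$, the exact inner solution is an argmax of the linear function $(r-A^\top\eta)^\top y_i$ over the polytope $\bY_i$. As a function of $\eta$ it is piecewise constant, it jumps where reduced costs tie or change sign, and it is set-valued there. No choice of $\mu$ makes it single-valued or Lipschitz. So the bound $\|y(\eta^{t-1})-\bar y_\mu\|\le L\|\eta^{t-1}-\bar\eta_\mu\|$, the Young absorption of the cross term, and the contraction of $\eta\mapsto\Pi_{\ge0}((1-\tau\mu)\eta-\tau(c-Ay(\eta)))$ have nothing to rest on. The link you sketch between $\mu\le 1/(2R)$ and that contraction is not an argument.

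The obstruction is real. Take $n=m=1$ and $\lambda_1=w_{10}=w_{11}=r_1=1$, so $R=1$; take $c_1=1/4$ and $\mu=1/8$.
\begin{itemize}
\item On $\bY_1$ you have $y_{11}\in[0,1/2]$.
\item The unique saddle point is $\bar\eta=1$, $\bar y_{11}=c_1+\mu r_1=3/8$, and there the inner LP is degenerate.
\item The exact inner step returns $y_{11}=1/2$ for $\eta<1$ and $y_{11}=0$ for $\eta>1$. The update \eqref{eq:stronglycvxupdate} then raises $\eta$ in the first case and lowers it in the second.
\end{itemize}
So $\eta^t$ oscillates around $1$, and generically $|y^t_{11}-3/8|\ge 1/8$ for every $t$. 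Under the stated hypothesis, Algorithm~\ref{alg:sda_non} need not converge to $\bar y_\mu$ at all.

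Neither of your fallbacks rescues the statement.
\begin{itemize}
\item In the example $Ay^t=y^t_{11}$ itself oscillates, so a contraction in $(Ay,\eta)$ is unavailable too.
\item Such control would not pin down $y^t$ in general anyway. Since $r^\top y=\sum_j r_j(Ay)_j$, the penalized objective depends on $y$ only through $Ay$, so with two identical customers $\bar y_\mu$ is not even unique.
\item A proximal term on $y$ changes the algorithm, so it proves a different statement.
\end{itemize}

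For comparison, the paper's analysis does not use a Lipschitz best-response map. Its argument runs as follows.
\begin{itemize}
\item It writes the problem as $\min_{y\in\bY}f(y)+g^*(Ay)$.
\item It asserts that $y\mapsto g^*(Ay)$ is $\rho_{\min}^2/\mu$-strongly convex because $A$ has full row rank (Lemmas~\ref{Lemma_pre_1}--\ref{Lemma_pre_2}).
\item Via a ghost iterate, it treats the exact block maximization as a long-step projected block-gradient step.
\item It closes a coupled recursion for $\Ex\|y^t-\bar y_\mu\|$ and $\Ex\|\eta^t-\nabla g^*(Ay^t)\|$ (Theorem~\ref{Theo_1}).
\item It checks that theorem's hypotheses via Lemma~\ref{Lemma_pre_3} and $\mu=1/(2R)$.
\end{itemize}
The strong-convexity assertion is exactly where the paper supplies the primal curvature you are missing. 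But with the paper's $g$, $g^*(Ay)=\tfrac{1}{2\mu}\|Ay-c\|^2$ has Hessian $A^\top A/\mu$ in $y$. That Hessian is singular because $A$ is wide, so full row rank gives no strong convexity in $y$. The paper's route therefore offers no valid way around your obstacle, consistent with the example above.
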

\noindent
The proof of Theorem~\ref{Theo_2_simple} is provided in Online Appendix~\ref{Proof:Theo_2_simple}.
The theoretical parameterization, particularly the design of the values for $R$ and $\mu$, is intrinsically linked to the structural formulation of the SBLP. 
Specifically, $R$ represents the maximum weighted average revenue across all customers, serving as a global scale for expected returns within the system. 
By setting the step size $\mu = 1/(2R)$, the algorithm employs an adaptive learning rate that accounts for this revenue scale. 
This configuration ensures that updates to the primal variables $y^t$ are appropriately tempered relative to the objective function's sensitivity. 
Consequently, the $1/R$ scaling prevents numerical oscillations and satisfies the technical conditions required to achieve the linear convergence rate established in Theorem~\ref{Theo_2_simple}.

\begin{remark}
The proof of Theorem~\ref{Theo_2_simple} shows that $\mathbb{E}[\|y^{t+1}-\bar y_\mu \|]^2\leq \left( 1-\Theta(B/n) \right) \|y^t-\bar y_\mu \|^2$. This convergence result is different from those obtained for PDLP \citep{lu2021nearly} or cuPDLP (\citealt{lu2023cupdlp}). The linear convergence guarantees in PDLP and cuPDLP rely on the polyhedral sharpness of linear programs, which links distance to the solution set to global KKT residuals or duality gaps via Hoffman-type error bounds. In contrast, our SPFOM algorithm never forms such residuals: each update is computed from only $B$ randomly sampled customers out of a population of size $n$, and therefore operates in a fundamentally different information regime.
\end{remark}

\subsection{Parallel Computing}
The \texttt{SPFOM} (Algorithm \ref{alg:sda_non}) tackles a large-scale problem by sampling small-scale inner problems, solving them, and updating the solution iteratively until convergence is achieved. 
A crucial aspect of this method is the sampling process. 
To accelerate convergence for a huge-scale problem, one strategy is to simultaneously sample many small-scale inner problems and then solve and update them in parallel.
The computational complexity of each iteration of the parallelized \texttt{SPFOM} is $O(mB/k)$ when considering $k$ workers.

\begin{algorithm}[tbp]
\OneAndAHalfSpacedXI
\caption{Parallelized \texttt{SPFOM}}\label{alg:sda}
\textbf{Input:} Number of workers $k$, step size $\tau$, number of iterations $T$, batch size $B$, initial values $\{y_{ij}^0\}$, $\{\eta_j^{0}\}$\\
\textbf{For} $t=1,2,\ldots,T$: \\
\quad Randomly generate a subset of customers $\mathcal{I} \subset [n]$ with size $B\cdot k$\\
\quad \textbf{Parallel for} each worker $i \in [k]$ \textbf{do}:\\
\quad \quad Solve the inner optimization problem $z_i$ of size $B$ through golden-section search and \texttt{FAST-INNER}\\
\quad \textbf{End parallel}\\
\quad Update $y_{ij}^t$ for all $i \in \mathcal{I}$ and $j\in [m]$\\
\quad Execute the projected dual ascent update:\\
\quad \quad \textbf{For} \( j \in [m] \):\\
\quad \quad \quad \( \eta_j \leftarrow \max \left\{ 0, \eta_j - \tau \left( c_j - \sum\limits_{i \in [n]} y_{ij} \right) \right\} \)\\
\quad \quad \textbf{End for}\\
\textbf{End for}\\
\textbf{Output:} $\{y_{ij}^T\}$, $\{\eta_j^{T}\}$ and the objective value.
\end{algorithm}

\begin{corollary}\label{claim_1}
Consider Algorithm~3 with $k$ parallel workers.
Under the assumptions of Theorem~2, the expected distance to the smoothed optimal solution satisfies
{\small
\[
\mathbb{E}\bigl[\|y^{t+1}-\bar y_\mu\|^2\bigr]
\;\le\;
\bigl(1-\Theta(kB/n)\bigr)\,
\mathbb{E}\bigl[\|y^{t}-\bar y_\mu\|^2\bigr].
\]
}
\end{corollary}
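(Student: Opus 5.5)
The plan is to reduce Corollary~\ref{claim_1} to the one-step contraction behind Theorem~\ref{Theo_2_simple}, recorded in the remark after it:
\[
\mathbb{E}\bigl[\|y^{t+1}-\bar y_\mu\|^2 \,\big|\, y^t\bigr]\le \bigl(1-\Theta(B/n)\bigr)\|y^t-\bar y_\mu\|^2 .
\]
I would first make its dependence on the batch size explicit. In the proof of Theorem~\ref{Theo_2_simple}, following Theorem~3.1 of \citet{du2019linear}, the primal block of each sampled customer $i$ is replaced by the exact best response to the current dual $\eta^t$ (computed via golden-section search and \texttt{FAST-INNER}, Lemma~\ref{lemma_1}). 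All other blocks are left unchanged. Hence, conditioned on the past, each customer block is refreshed with probability $p=B/n$. The expected squared distance then splits as
\[
(1-p)\|y^t-\bar y_\mu\|^2 + p\,\|\hat y(\eta^t)-\bar y_\mu\|^2 ,
\]
where $\hat y(\eta^t)$ is the full best-response vector. The remaining terms come from the $\mu$-strongly convex dual step~\eqref{eq:stronglycvxupdate}, whose error is controlled through the Lyapunov-type argument of the theorem. The contraction factor therefore has the form $1-c\,p$, with $c$ depending only on $\mu$, $\tau$, $R$ and the spectral quantities of the coupling matrix, not on $B$.

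The second step is to observe that Algorithm~\ref{alg:sda} differs from Algorithm~\ref{alg:sda_non} only in how the work is distributed. At each iteration a set $\mathcal I$ of size $kB$ is drawn uniformly, split among $k$ workers, and each block is solved independently with the same $\eta^t$. After the parallel section the update of $y$ and $\eta$ is identical to one serial iteration with batch size $kB$, because the inner problems for distinct customers are decoupled given $\eta$. So the iterate sequence of Algorithm~\ref{alg:sda} has the same law as that of Algorithm~\ref{alg:sda_non} with $B$ replaced by $kB$. Substituting $p=kB/n$ into the one-step bound and taking total expectations (tower property) gives the stated inequality.

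The main obstacle is step one: confirming that the constants hidden in $\Theta(B/n)$ do not deteriorate as the batch grows. This matters because the dual step uses $\sum_i y_{ij}$, and a larger batch changes more primal coordinates between consecutive dual updates. I would check that the step-size condition on $\tau$ in the proof of Theorem~\ref{Theo_2_simple} is independent of $B$, or at worst requires $\tau$ to be uniform for $kB\le n$. If the analysis instead needs $\tau$ to scale like $1/B$, I would keep $kB/n$ bounded by a constant and absorb the resulting factor into the $\Theta$. In either regime $kB\le n$ keeps $1-\Theta(kB/n)\in(0,1)$, so the linear rate is preserved.
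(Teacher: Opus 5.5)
Your second step is exactly the paper's proof: Algorithm~\ref{alg:sda} is, in law, one iteration of Algorithm~\ref{alg:sda_non} with batch size $kB$, so the argument behind Theorem~\ref{Theo_2_simple} is rerun with $B$ replaced by $kB$. The uniformity you flag as the main obstacle can be read off Theorem~\ref{Theo_1}. There, every constant and hypothesis involving $B$ (the admissible range $C<2\mu B/(n(\rho_{\min}^2+\rho_{\max}^2))$, the condition $\sqrt{1-B\tau\mu/n}+C\rho_{\max}^2/\mu<1$, condition~\eqref{eq_rela_1}, and the factors $C_1=\sqrt{1-B\theta\sigma/n}$, $C_4=\sqrt{1-B\tau\mu/n}+L\theta$) only becomes more favorable as $B$ grows, while $\tau\le\mu^{-1}$ does not involve $B$. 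So you need neither your best-response split nor the $\tau\propto 1/B$ fallback, and the latter would in fact cancel the $kB$ gain rather than be absorbed into the $\Theta$.
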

\noindent
The proof of Corollary~\ref{claim_1}, which extends the analysis of Theorem~\ref{Theo_1}, is provided in Online Appendix~\ref{Proof:claim_1}.
The corollary shows that parallelization improves the contraction rate by effectively increasing
the number of sampled customers per iteration from $B$ to $kB$.
That is, while each worker processes a mini-batch of size $B$,
the aggregation across $k$ workers yields a linear speedup in the convergence rate.
This acceleration is orthogonal to the per-iteration computational complexity reduction,
which is discussed separately.

\subsection{Mapping the SBLP Solution to the CBLP}
A key challenge in applying the proposed framework is to map solutions from the SBLP back to interpretable decision strategies under the CBLP.
To this end, a recovery procedure is developed to reconstruct a feasible CBLP solution from a given SBLP output, thereby enabling actionable recommendations to be derived from the computationally efficient surrogate formulation.
\begin{lemma}\label{lemma_recover}
For each customer $i \in [n]$, sort the products by the ratio $y_{ij}/w_{ij}$ for each $j \in [m]$ in decreasing order.
Define the sets $\mathcal{S}_{i0}=\emptyset$ and $\mathcal{S}_{i\ell}=\{0, 1, .., \ell\}$ for each $\ell \in [n]$, where $\mathcal{S}_{i\ell}$ consists of the products ordered by the ration $y_{ij}/w_{ij}$ in decreasing order.
Hence, define
$$
    \alpha (\mathcal{S}_{i\ell}) = 
    \begin{cases}
        \left( \frac{y_{i \ell}}{w_{i \ell}} - \frac{y_{i \ell+1}}{w_{i \ell+1}} \right) 
        \sum\limits_{j \in \mathcal{S}_{i\ell}} w_{ij}
        \frac{1}{\lambda_i}  & \ell < n \\
         \frac{y_{i \ell}}{w_{i \ell}} 
         \sum\limits_{j \in \mathcal{S}_{i\ell}} w_{ij}
         \frac{1}{\lambda_i}  &  \ell = n.
    \end{cases}
$$
The value of $\alpha (\mathcal{S}_{i\ell})$ represents the probability that the set $\mathcal{S}_{i\ell}$ will be recommended.
\end{lemma}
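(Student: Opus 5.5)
The plan is to verify directly that the weights $\alpha(\mathcal{S}_{i\ell})$ form a valid randomization over the nested sets and reproduce the SBLP sales. This is the standard Gallego et al. reconstruction. Fix a customer $i$ and relabel products so that $y_{i1}/w_{i1}\ge y_{i2}/w_{i2}\ge\cdots\ge y_{im}/w_{im}$. Here the index $\ell$ should run over $[m]$ (the ``$n$'' in the statement plays the role of the number of products). Set $\mathcal{S}_{i\ell}=\{1,\dots,\ell\}$, and use the convention $y_{i,m+1}/w_{i,m+1}:=0$ so that the case $\ell=m$ is just the case $\ell<m$ with a zero tail. For $\mathcal{S}_{i0}=\emptyset$ I would assign the residual weight $\alpha(\emptyset)=\big(y_{i0}/w_{i0}-y_{i1}/w_{i1}\big)w_{i0}/\lambda_i$. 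This is the natural extension of the formula when the no-purchase weight $w_{i0}$ is included in the sum $\sum_{j\in\mathcal{S}_{i\ell}}w_{ij}$ (the statement writes $\{0,1,\dots,\ell\}$, so the sum includes $w_{i0}$).

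\emph{Step 1 (nonnegativity).} Each $\alpha(\mathcal{S}_{i\ell})\ge 0$ because the ratios are sorted decreasingly and $y_{ij}\ge 0$. The weight $\alpha(\emptyset)\ge0$ follows exactly from constraint \eqref{eq:sales-based_3}.

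\emph{Step 2 (the weights sum to one).} With $d_\ell:=y_{i\ell}/w_{i\ell}$, $d_0:=y_{i0}/w_{i0}$, $d_{m+1}=0$ and $W_\ell:=w_{i0}+\sum_{j\le\ell}w_{ij}$, I would apply Abel summation:
\[
\lambda_i\sum_{\ell=0}^{m}\alpha(\mathcal{S}_{i\ell})=\sum_{\ell=0}^m (d_\ell-d_{\ell+1})W_\ell=\sum_{\ell=0}^m d_\ell (W_\ell-W_{\ell-1})=\sum_{\ell=0}^m w_{i\ell}d_\ell=\sum_{j=0}^m y_{ij}=\lambda_i,
\]
with $W_{-1}:=0$. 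The last equality is constraint \eqref{eq:sales-based_2}.

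\emph{Step 3 (sales are reproduced).} Set $x_i(\mathcal{S}_{i\ell})=\lambda_i\alpha(\mathcal{S}_{i\ell})$. Under MNL, product $j$ is purchased from $\mathcal{S}_{i\ell}$ with probability $w_{ij}/W_\ell$ whenever $j\le\ell$. Hence
\[
\sum_\ell x_i(\mathcal{S}_{i\ell})\pi_{ij}(\mathcal{S}_{i\ell})=\sum_{\ell\ge j}(d_\ell-d_{\ell+1})w_{ij}=w_{ij}d_j=y_{ij},
\]
by telescoping. The analogous computation for $j=0$ runs over all $\ell\ge 0$ and gives $y_{i0}$. Consequently the CBLP objective \eqref{eq:choice-based_obj} and the capacity constraints \eqref{eq:choice-based_1} evaluated at $x$ coincide with the SBLP objective and constraint \eqref{eq:sales-based_1} at $y$, and \eqref{eq:choice-based_2}–\eqref{eq:choice-based_3} follow from Steps 1–2. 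Therefore $x$ is CBLP-feasible with the same revenue, and by the equivalence of \citet{gallego2015general} it is optimal whenever $y$ is. This justifies reading $\alpha(\mathcal{S}_{i\ell})$ as the probability of recommending $\mathcal{S}_{i\ell}$.

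The main obstacle is not any calculation but making the statement's indexing consistent. Specifically, this means choosing whether $0\in\mathcal{S}_{i\ell}$ (i.e., whether $w_{i0}$ enters the sum), treating the boundary cases $\ell=0$ and $\ell=m$, and handling ties. I would fix the convention above and treat ties by an arbitrary tie-breaking order; a tie produces a zero weight and does not affect the result. Once the convention is fixed, the two telescoping identities are immediate. For GAM the same argument should go through with $W_\ell$ replaced by $w_{i0}+\sum_{j\le\ell}w_{ij}+\sum_{j>\ell}v_{ij}$, but I would not pursue that here.
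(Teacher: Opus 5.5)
Your proposal is correct. It is exactly the nested-assortment reconstruction of \citet{gallego2015general} and \citet{topaloglu2012tractable}, which the paper cites in place of a proof, and your nonnegativity, Abel-summation and telescoping checks supply the verification the paper leaves out. Your repair of the indexing is also the right one: letting $\ell$ range over $[m]$, including $w_{i0}$ in the sums, and setting $\alpha(\emptyset)=w_{i0}\bigl(y_{i0}/w_{i0}-y_{i1}/w_{i1}\bigr)/\lambda_i$. It is needed, because the literal empty-set sum gives $\alpha(\emptyset)=0$, and then the weights fall short of one by exactly that amount.
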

\noindent
Here, the value of $\alpha$ corresponds to the value of $x$ in the CBLP.
The proof of this result follows the construction approach developed in \citet{topaloglu2012tractable} and \citet{gallego2015general}.

When the SBLP solution is mapped back to the CBLP via Proposition~\ref{lemma_recover}, the resulting assortment space is restricted to a subset of all possible product combinations.
For example, consider a setting with three products $\{1,2,3\}$, ordered according to the decreasing ratio $\frac{y_{ij}}{w_{ij}}$ as specified in Proposition~\ref{lemma_recover}.
Under the CBLP formulation, the recommendation strategy may assign probabilities over the full power set of products, yielding $2^3=8$ combinations (including the empty set): $\emptyset$, $\{1\}$, $\{2\}$, $\{3\}$, $\{1, 2\}$, $\{1, 3\}$, $\{2, 3\}$, $\{1, 2, 3\}$.
In contrast, the SBLP solution restricts attention to only a prefix-closed subset of assortments based on the sorted order, namely: $\emptyset$, $\{1\}$, $\{1, 2\}$, $\{1, 2, 3\}$. 
By limiting the feasible set to such structured combinations, the SBLP achieves a significant reduction in computational complexity, while still offering a close approximation to the CBLP's revenue performance.

\subsection{Numerical Study}
Numerical experiments with \texttt{SPFOM} are conducted to perform a comparative analysis with \texttt{PDLP}, which is a state-of-the-art first-order method for solving huge-scale linear programs~\citep{applegate2021practical, lu2023cupdlp}.
The computing environment for our experiments is as follows: 
CPU: AMD EPYC 7763 64-Core Processor, x86\_64, 2.50 GHz (max), 1.50 GHz (base), 128 threads (64 cores per socket);
RAM: 1.0 TiB total, 5.7 GiB used, 990 GiB free, 11 GiB buff/cache, 996 GiB available;
Operating System: Ubuntu 20.04.3 LTS, Linux kernel 5.4.0-94-generic;
Programming language: Julia 1.10.4.

\begin{figure}[tbp]
\centering
\caption{Illustration of convergence and optimality behavior of \texttt{SPFOM}: 
The left figure (a) shows the convergence of \texttt{SPFOM} with $m=100$ and $n=10,000$.
The right figure (b) compares the optimum values of \texttt{SPFOM} and \texttt{PDLP} for $m=100$ and $n \in [1000, \num{10000}]$. }
\subfigure[Convergence
]{\label{KModelIllus}\includegraphics[width=50mm]{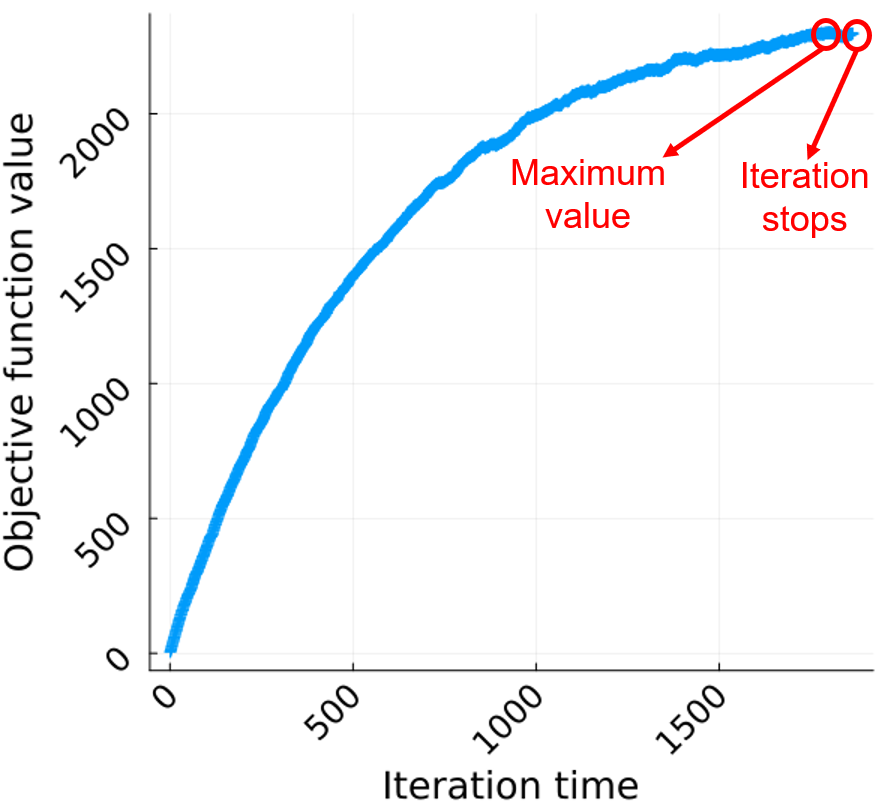}}
\hspace{5mm}
\subfigure[Optimal objective value]{\label{NModelIllus}\raisebox{-0.9mm}{%
      \includegraphics[width=70mm]{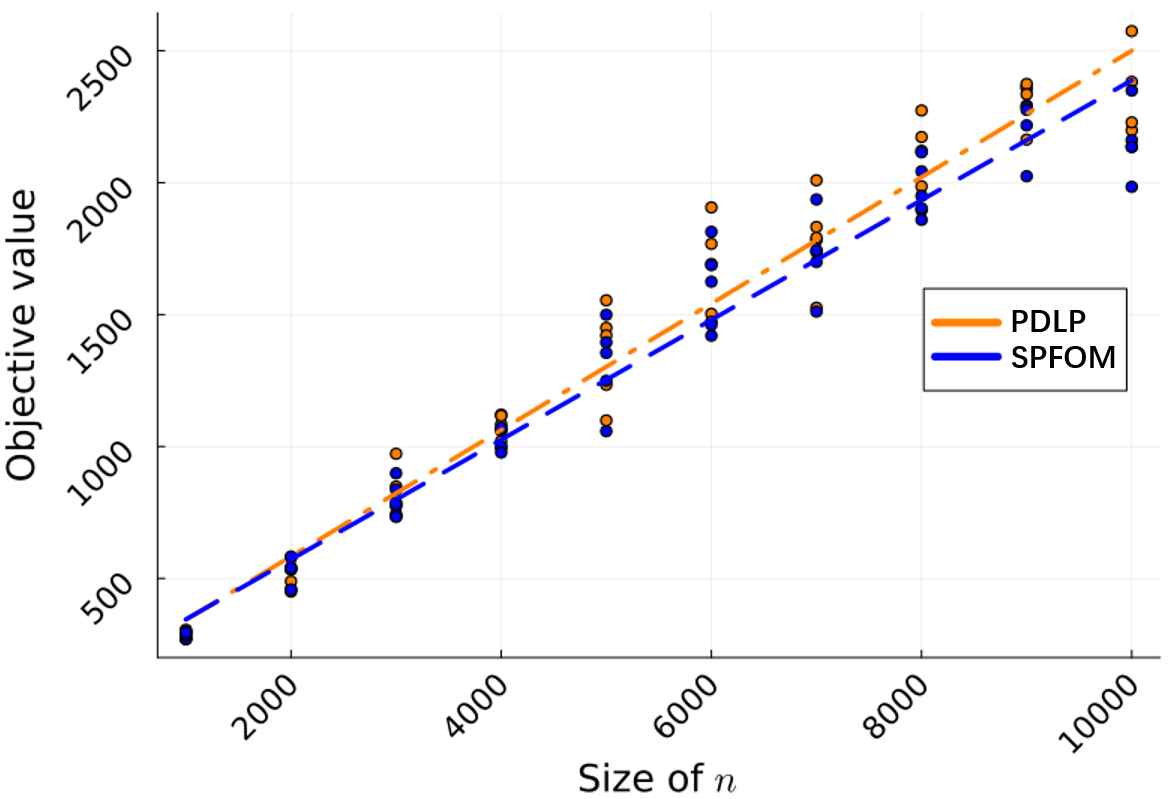}}}
\label{Fig:MDP_00}
\end{figure}

\paragraph{Convergence, Optimality, and Computational Time of \texttt{SPFOM}.}
Parameters $c$ (capacity), $r$ (price), $w$ (preference) are generated from a continuous uniform distribution $U(0, 1)$.
The other computational variables are set to standard values: $\tau = 0.1$, $\epsilon = 0.001$ and $\mu = 0.5$.
Batch size is set to $B=10$.
The convergence criterion is established by using a stagnation-based stopping criterion. 
The iteration process stops when the objective function value remains unchanged for a specified number of consecutive iterations, as shown in Figure~\ref{Fig:MDP_00}(a).
Based on empirical evidence, this number is set to $n/100$.
In practice, this number does not significantly affect the iterations, as we set a large value to ensure that the optimal or near-optimal solution is achieved.
Objective function values of \texttt{SPFOM} and \texttt{PDLP} are compared in Figure~\ref{Fig:MDP_00}(b). 
As $n$ grows, representing an increasing size imbalance between the two sides of the market, the objective value of \texttt{PDLP} becomes noticeably larger than that of \texttt{SPFOM}, whose performance remains near optimal.
Because \texttt{SPFOM} employs a primal-dual approach that embeds constraint~\eqref{eq:choice-based_1} into the objective function~\eqref{eq:choice-based_obj} (as reformulated in~\eqref{entro_obj}), its solutions may not fully satisfy constraint~\eqref{eq:choice-based_1} in finite samples.

\begin{figure}[tbp]
\centering
\caption{Computing speed of \texttt{SPFOM}: 
The figure (a) compares the computing speed of \texttt{SPFOM} and \texttt{PDLP} in a small-size market where $m=100$ and $n \in [1000, \num{10000}]$. 
The figure (b) shows the logarithmic computing time ($\log_{10}$) of both \texttt{SPFOM} and \texttt{PDLP}, averaged over several runs.
The figure (c) displays the computing time in hours for \texttt{SPFOM} in a large-sized market where $n \in [\num{10000}, \num{1000000}]$ and $m= \max \{100, n/1000\}$.
}\label{Fig:Speed}
\subfigure[Computing time (s)\color{red} \color{black}]{\label{speed_a}\includegraphics[width=49mm]{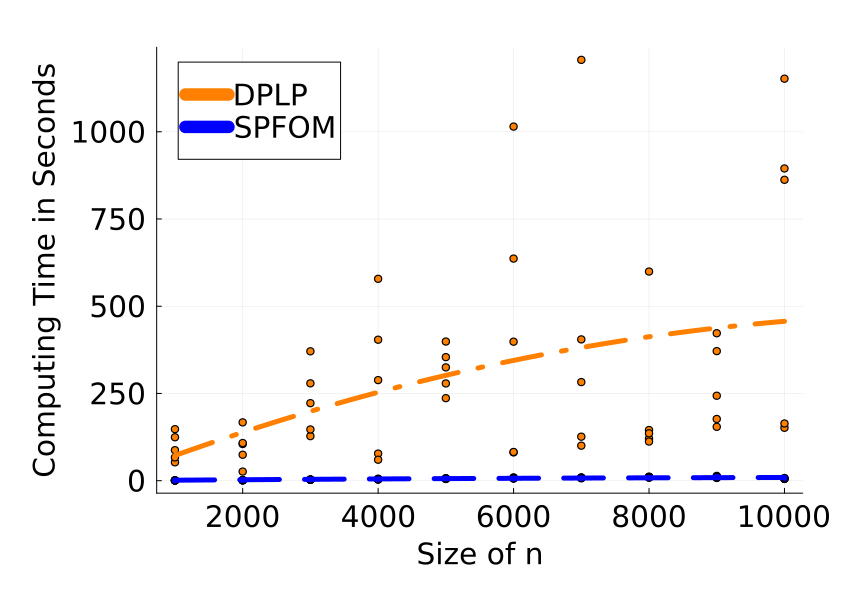}}
\hspace{2mm}
\subfigure[Logarithmic time ($\log_{10}$)]{\label{speed_b}\raisebox{0mm}{%
      \includegraphics[width=49mm]{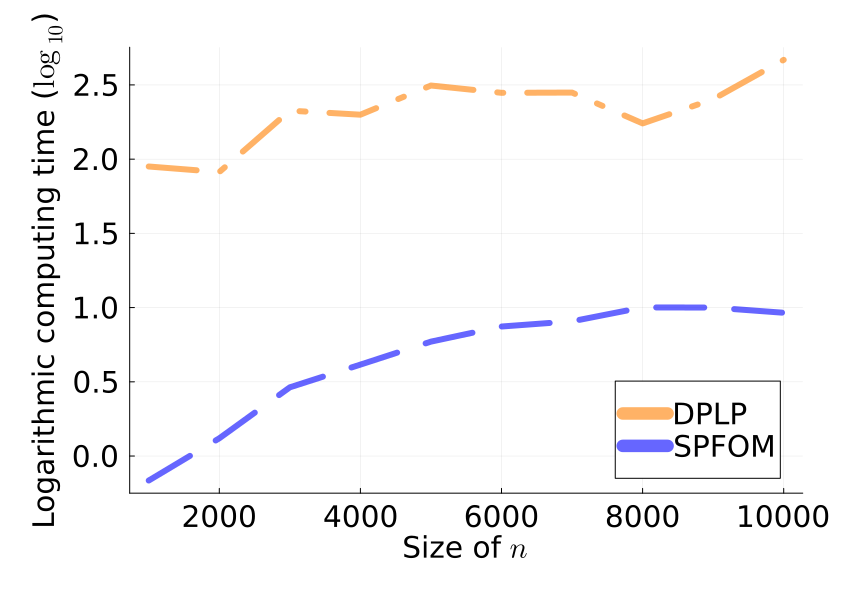}}}
\hspace{2mm}
\subfigure[Large-scale market]{\label{speed_c}\raisebox{-4.5mm}{%
      \includegraphics[width=57mm]{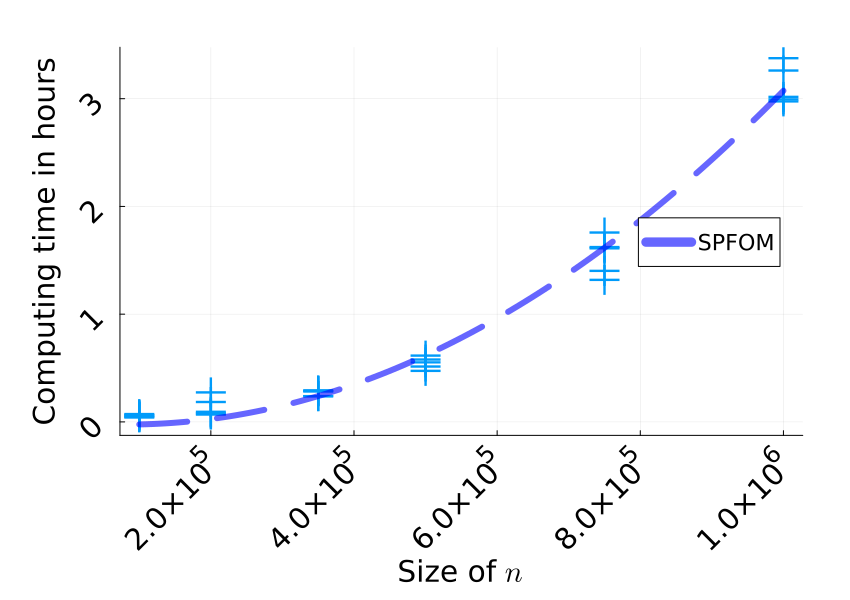}}}
\end{figure}

Computing speed of \texttt{SPFOM} compared to \texttt{PDLP} is shown in Figure~\ref{Fig:Speed}.
Figure~\ref{Fig:Speed}(a) shows individual computing examples where $n \in [\num{1000}, \num{10000}]$ for both methods. 
A polynomial regression highlights the increasing computing time as $n$ grows, with two key observations: (i) \texttt{SPFOM} computes significantly faster than \texttt{PDLP}, and (ii) \texttt{SPFOM} exhibits more stable computing times, whereas \texttt{PDLP} shows larger variation.
The higher variance in \texttt{PDLP}'s runtime arises from its adaptive restart and scaling mechanisms, which cause the number of iterations and memory access patterns to fluctuate across instances~\citep{applegate2021practical, lu2025cupdlp}.
Figure~\ref{Fig:Speed}(b) illustrates the averaged logarithmic computing time for both algorithms across multiple runs.
In Figure~\ref{Fig:Speed}(c), \texttt{SPFOM}'s performance is examined in a large-scale market with $n \in [\num{1000}, \num{1000000}]$ and $m = \max\{100, n/1000\}$. 
In this case, \texttt{SPFOM} computes recommendations for \num{1000} product types to \num{1000000} customers in around 5 hours on our CPU.
The performance of \texttt{PDLP} is not included in Figure~\ref{Fig:Speed}(c) since it crashes when $n$ exceeds \num{100000} due to out-of-memory issues.
This happens because \texttt{PDLP} significantly increases storage demands during the model pre-solve step as problem size grows, causing its computation time to rise dramatically in our experiments.

\begin{table}[tbp]
\centering
\caption{
Average computation time of the parallelized \texttt{SPFOM} algorithm for $m=100$, $n=\num{100000}$ and $k \cdot B=\num{3000}$ with different numbers of workers (core numbers) $k \in [1, 2, 4, 6, 8]$ and optimality gap larger than $95 \%$. 
}
\renewcommand*{\arraystretch}{1.2}
\begin{tabular}{c|c|c|c}
\toprule
  $k$  & Computing time (s) & Logarithmic time ($\log_{10}$) & Iteration times \\
\midrule
Single core &    1733.5    &   3.2  &  1014  \\
2 cores     &    1242.7    &   3.1  &  636   \\
4 cores     &    101.5     &   2.0  &  47    \\
6 cores     &     36.7     &   1.6  &  15    \\
8 cores     &    24.9      &   1.4  &  9     \\
\bottomrule
\end{tabular}
\label{Tab:Parallel}
\end{table}

\paragraph{Speedup of Parallel Computing.}
In large-scale optimization, increasing problem size leads to significantly higher computational costs. 
For example, as the problem grows, the column generation process becomes more complex and resource-intensive. 
The storage and computational power required by the algorithm rise non-linearly with the number of decision variables as it searches for the optimal point. 
Additionally, solvers perform a \textit{pre-solve} step before finding the solution, but for large problems, this step alone can overwhelm the computer's storage capacity.

The \texttt{SPFOM} algorithm does not pre-possess many decision variables at each iteration step.
Instead, it computes and updates the solution for many small-scale problems, thereby avoiding the aforementioned computational limitations.
As shown in Table~\ref{Tab:Parallel}, the computation time decreases significantly with more parallel workers. 
In steps 3-5 of the parallelized \texttt{SPFOM} algorithm~\ref{alg:sda}, small-scale inner problems $z_i$ are computed in parallel. 
Multiple inner problems are sampled simultaneously, and then $\eta_j$ is updated using all the computed results. 
This effectively increases the iteration step size, reducing the number of steps needed for \texttt{SPFOM} to converge, thus speeding up the overall computation.

Compared to the non-parallelized \texttt{SPFOM} (Algorithm~\ref{alg:sda_non}), the parallelized \texttt{SPFOM} (Algorithm~\ref{alg:sda}) includes an additional data transfer step. 
Specifically, when inner problems are assigned to different workers, their parameters and results need to be transferred, a process referred to as \textit{fetch} in \texttt{Julia 1.10.4}.
This introduces two effects in our experiments. 
First, parallel \texttt{SPFOM} is more efficient than the non-parallel version when the batch size $B$ is large, as fetch time is relatively short compared to computation time in each iteration. 
Second, when the number of workers is large and iterations are few, the marginal speedup decreases because a larger proportion of time is spent on fetching.

\section{Extensions}\label{sec5:multiNRM}
\subsection{Extension 1: Multi-Period Setting} 

The proposed framework can be extended to a multi-period setting to better capture dynamic environments in which both demand and inventory evolve over time.
This section illustrates how the \texttt{SPFOM} algorithm can be incorporated into a bid-price control policy for such dynamic settings, and evaluates its performance through numerical experiments.

\subsubsection{Multi-Period LP Formulation}
We present the multi-period CBLP formulation as follows. This formulation contains dynamic assortment decisions for multiple stages, indexed by $t$, from $1$ to $T$, where the customer choice distribution is i.i.d..
{
\begin{align}
\max_{\{x_i^t(S)\}} \quad & \sum_{t=1}^T \sum_{i \in [n]} \sum_{S \in \mathcal{S}} x_i^t(S) \cdot \sum_{j \in S} \pi_{ij}(S)  \cdot r_j \label{eq:cblp-obj} \\
\text{s.t.} \quad 
& \sum_{t=1}^T \sum_{i \in [n]} \sum_{S \in \mathcal{S}} x_i^t(S) \cdot \textbf{1}(j \in S) \cdot \pi_{ij}(S) \leq c_j^1, && \forall j \in [m] \label{eq:cblp-inv} \\
& \sum_{S \in \mathcal{S}} x_i^t(S) = \lambda_i^t, && \forall i \in [n],~ t \in [T] \label{eq:cblp-rate} \\
& x_i^t(S) \geq 0, && \forall i \in [n],~ S \in \mathcal{S},~ t \in [T]. \label{eq:cblp-nonneg}
\end{align}
}
The corresponding multi-period SBLP is given as follows.
{
\begin{align}
\max_{\{y_{ij}^t\}} \quad
& \sum_{t = 1}^T \sum_{i \in [n]} \sum_{j \in [m]} y_{ij}^t \cdot r_j \label{eq:sblp-obj} \\
\text{s.t.} \quad
& \sum_{t = 1}^T \sum_{i \in [n]} y_{ij}^t \leq c_j^1, && \forall j \in [m] \label{eq:sblp-inv} \\
& \sum_{j = 0}^m y_{ij}^t = \lambda_i^t, && \forall i \in [n],~ t \in [T] \label{eq:sblp-balance} \\
& \frac{y_{ij}^t}{w_{ij}} \leq \frac{y_{i0}^t}{w_{i0}}, && \forall i \in [n],~ j \in [m],~ t \in [T] \label{eq:sblp-ratio} \\
& y_{ij}^t \geq 0, && \forall i \in [n],~ j \in \{0\} \cup [m],~ t \in [T] \label{eq:sblp-nonneg}
\end{align}
}
\noindent
Compared with the single-period formulation, the time index $t$ is introduced to capture temporal dynamics.
The total inventory $c_j^1$ is given initially and gradually depletes across periods according to the realized recommendations and customer choices. 
Consequently, both the arrival intensities $\lambda_i^t$ and preference weights $w_{ij}^t$ evolve over time.
The objective is to maximize the cumulative expected revenue from a sequence of product recommendations over the planning horizon.

\subsubsection{Bid-Price Control}
Bid price control is a widely used policy in multi-period RM problems for efficiently allocating limited resources across incoming customer requests \citep{talluri1998analysis}. 
The core idea is to assign an implicit value, known as a \emph{bid price}, to each unit of resource, and to accept a customer request only if the associated revenue exceeds the total value of the resources consumed. 
Formally, let $v_j^t$ denote the bid price of resource $j$ at period $t$, which represents the opportunity cost of consuming one additional unit of resource $j$. 
Under the bid price control rule, product $j$ is recommended to customer $i$ at time $t$ only if the marginal profit satisfies $r_j^t - v_j^t \ge 0$, where $r_j^t$ is the price of product $j$. 
In our framework, $v_j^t$ corresponds to the dual variable $\eta_j^t$ obtained from the \texttt{SPFOM} formulation, which captures the shadow value of the remaining capacity. 
Intuitively, this rule ensures that resources are allocated only when the immediate gain $r_j^t$ exceeds the future value of capacity $v_j^t$, thereby balancing short-term revenue and long-term availability.

Although the multi-period setting involves sequential decision-making over time, our formulation is fundamentally different from standard online algorithms.
In online learning frameworks, the decision-maker observes feedback (e.g., realized demand) and adaptively updates decisions without knowledge of future arrivals, typically aiming to minimize \textit{regret} relative to an offline benchmark.
In contrast, our multi-period model assumes that the stochastic demand and preference distributions are known in advance, allowing the problem to be formulated as a deterministic, expectation-based linear program.
Accordingly, our focus lies in developing an efficient primal-dual optimization algorithm for solving large-scale structured LPs, rather than in minimizing regret under partial information.
Among possible control frameworks, bid-price control offers a tractable and interpretable approach that aligns naturally with LP-based formulations~\citep{talluri1998analysis}.
Our work therefore enhances this classical framework by improving its computational scalability through the proposed \texttt{SPFOM} algorithm.

\subsubsection{Global Market vs. Market Segmentation Decomposition}

Under the bid-price control framework, two decision policies are compared: a global optimization (GO) policy based on \texttt{SPFOM}, and a segment-level policy derived from market segmentation decomposition (MSD).

\paragraph{GO Policy.}
For each batch, products with zero inventory are first removed.
If the number of available products does not exceed the recommendation limit, all remaining products are recommended; 
otherwise, the corresponding SBLP is solved using \texttt{SPFOM} to obtain global dual prices.
A bid-price control rule is then applied, recommending products whose marginal revenue exceeds their bid prices.
Customer purchases are simulated under the BAM framework, and inventory levels are updated accordingly.

\paragraph{MSD Policy.}
The MSD policy processes each market segment independently within each batch.
For each segment, if the number of available products is within the recommendation limit, all are recommended; 
otherwise, the segment-specific CBLP is solved to obtain local dual prices, which are used to guide bid-price control rule's recommendations.
Purchases are simulated separately for each segment, and the resulting outcomes are aggregated to update overall inventory and revenue.

\begin{figure}[tbp]
\centering
\caption{
We simulate a market with 100 batches of 1000 customers each, where 2 products are recommended per batch.
The platform offers 40 products whose prices are drawn uniformly from $[1, 20]$ and initial inventories from $[1, 2000]$.
Each customer's preference toward each product is independently drawn from $[0, 1]$.
In every batch, the MSD policy partitions customers into 10 segments, whereas the global policy operates on the batch as a whole.
All results reported in the figure are averaged over 30 independent simulation runs.
} 
\subfigure[Cumulative Revenue]{\label{speed_a}\includegraphics[width=53mm]{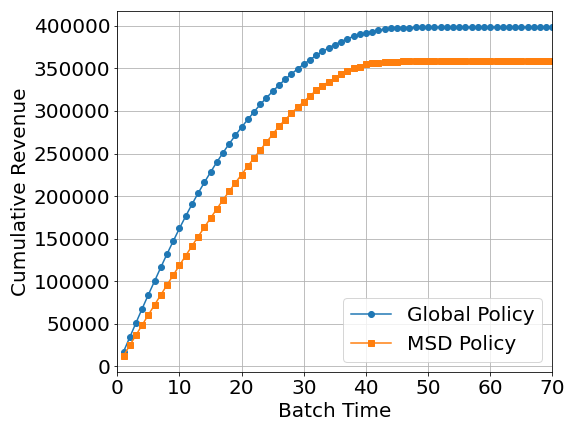}}
\hspace{0mm}
\subfigure[Revenue per Batch]{\label{speed_b}\raisebox{0mm}{%
      \includegraphics[width=53mm]{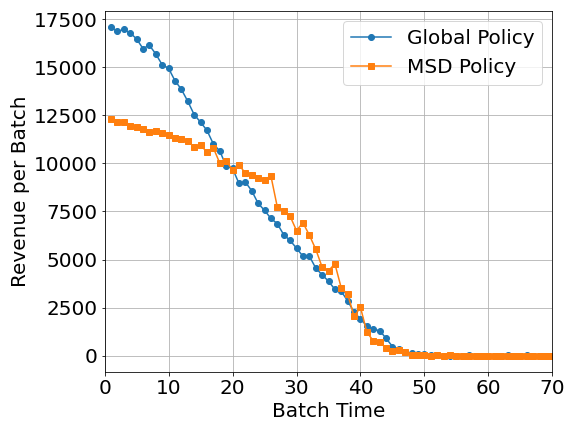}}}
\hspace{0mm}
\subfigure[Quantity Sold per Batch]{\label{speed_c}\raisebox{0mm}{%
      \includegraphics[width=53mm]{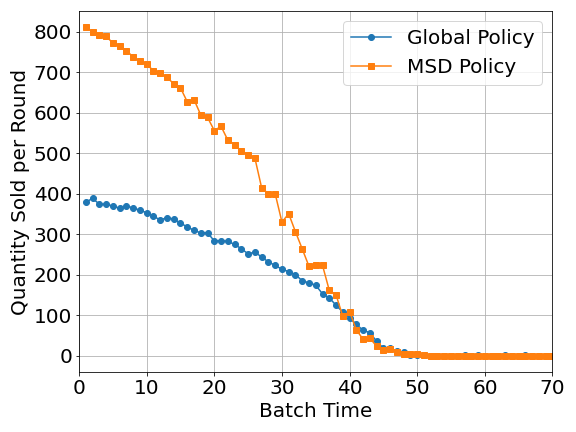}}}
\label{Fig:Speed222}
\end{figure}

\begin{result}\label{result_1}
When the overall inventory is sufficiently abundant, the GO policy identifies and recommends higher-revenue products while utilizing inventory more efficiently than the MSD policy.
\end{result}

Figure~\ref{Fig:Speed222} compares the GO and MSD policies across key performance metrics.
As shown in Figure~\ref{Fig:Speed222}(a), the GO policy consistently achieves higher cumulative revenue than the MSD policy under identical inventory and demand conditions.
This difference reflects the distinct recommendation mechanisms of the two policies (Result~\ref{result_1}): by leveraging global information through \texttt{SPFOM}-computed shadow prices, the GO policy constructs higher-value assortments.
Figures~\ref{Fig:Speed222}(b) and~(c) further illustrate the temporal dynamics.
In early batches, the GO policy achieves higher revenue per batch while selling fewer units, whereas the MSD policy sells more units but with lower per-batch revenue.
As a result, the MSD policy rapidly depletes inventory and suffers a sharp revenue collapse after around batch 30, while the GO policy maintains a more gradual decline.
These results highlight the advantage of the GO policy in sustaining profitability and managing inventory consumption over time, particularly under medium- to long-term operational horizons.
From an economic perspective, this advantage stems from its ability to capture the long-term marginal value of inventory through global shadow prices and to use such signals to coordinate assortment decisions across products.
A similar economic principle underlies recent work in multi-product inventory systems, where shadow values are used to guide allocation and upgrading decisions in a forward-looking manner~\citep{tang2025multiproduct}.
Recent studies have also shown that even under complex sequential choice and search dynamics, optimal or near-optimal assortment decisions can be guided by simple reduced-form signals, such as last-choice probabilities, which play a role analogous to shadow values in dynamic systems~\citep{gao2023assortment}.

\subsection{Extension 2: Network Revenue Management with Bundling}

We extend our framework to Network Revenue Management (NRM) with bundling. 
In this setting, $n$ customers (indexed by $i$) choose from $m$ bundles (indexed by $j$) composed of $L$ base products (indexed by $\ell$). 
The resource constraint \eqref{eq:sales-based_1} is generalized as:
\begin{equation}\label{eqa:new_NRM_1}
    \sum_{i\in [n]} \sum_{j\in [m]} \tilde{B}_{\ell j} y_{ij} \le c_\ell, \quad \forall \ell \in [L],
\end{equation}
where $y_{ij}$ represents the selection of bundle $j$ by customer $i$. 
This formulation introduces a binary resource-request incidence matrix $\tilde{B} \in \{0, 1\}^{L \times m}$, where each element $\tilde{B}_{\ell j}$ indicates whether bundle $j$ consumes base product $\ell$, thereby capturing the joint consumption of resources.

\begin{corollary}\label{coro_new_2}
    Suppose $\sum\limits_{j \in [m]} ( r_j - \eta_j^t ) \leq \tilde{R}$ for all $t$.
    Then, for a $\mu$ satisfying $ \mu \leq \frac{\rho_{\max}^2}{2\tilde{R}} $, Algorithm~\ref{alg:sda_non} converges linearly to the unique optimal solution.
    Here, $\rho_{\max}$ denotes the maximum singular value (spectral norm) of the constraint matrix $\tilde{A}$ associated with the general formulation of \eqref{eq:sales-based_obj}, \eqref{eqa:new_NRM_1}, and \eqref{eq:sales-based_2}-\eqref{eq:sales-based_4}. 
\end{corollary}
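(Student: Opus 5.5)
The plan is to rerun the argument behind Theorem~\ref{Theo_2_simple}, which follows Theorem~3.1 of \citet{du2019linear}, with the bundle constraint matrix in place of the product-capacity coupling. First write the bundle problem as a regularized saddle point:
\[
\min_{\eta\ge 0}\ \max_{y\in Y}\ \sum_{i,j} r_j y_{ij} + \eta^\top (c - \tilde{A} y) - \frac{\mu}{2}\|\eta\|^2 ,
\]
where $\tilde{A}$ stacks the rows $\sum_{i}\sum_j \tilde{B}_{\ell j} y_{ij}$. Because $\tilde{A}$ is block-structured, $\tilde{A} = [\tilde{B},\dots,\tilde{B}]$ across customers, the primal update still decomposes over customers. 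Each sampled customer's inner problem is the same greedy problem as in Lemma~\ref{lemma_1}, with reduced prices $r_j - (\tilde{B}^\top\eta)_j$. So Algorithm~\ref{alg:sda_non} runs unchanged. The $\mu$-regularization makes the dual $\mu$-strongly concave, and maximizing $y$ out gives the penalized primal analogous to \eqref{eq:new_obj}. Its optimizer $\bar y_\mu$ is unique, because the penalty $\frac{1}{2\mu}\|(\tilde A y - c)_+\|^2$ composed with $\tilde{A}$ gives curvature of order $\rho^2/\mu$ on the range of $\tilde{A}^\top$. We must also argue that the linear objective plus the polytope $Y$ pins down the remaining directions, exactly as in the proof of Theorem~\ref{Theo_2_simple}.

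Second, set up the Lyapunov function $\Phi^t = \|y^t-\bar y_\mu\|^2 + \gamma\|\eta^t-\bar\eta_\mu\|^2$. We will bound its expected one-step decrease, conditioning on the uniformly sampled batch of size $B$. The sampled coordinates each move toward their best response with probability $B/n$, which yields the $(1-\Theta(B/n))$ factor in the primal term. The dual step \eqref{eq:stronglycvxupdate} is a contraction with factor $1-\tau\mu$, perturbed by a cross term $\tau\,\langle \eta^t-\bar\eta_\mu, \tilde{A}(y^t-\bar y_\mu)\rangle$. We control that cross term with $\|\tilde{A}\|\le\rho_{\max}$ and Young's inequality, which is exactly where $\rho_{\max}^2$ enters.

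Third, we need to bound how far a best response moves when $\eta$ changes. In Theorem~\ref{Theo_2_simple} this sensitivity was controlled by $R$, the maximal weighted average revenue. Here the hypothesis $\sum_j (r_j-\eta_j^t)\le \tilde{R}$ plays that role: it bounds the reduced-revenue scale of every inner greedy solution uniformly in $t$. Consequently the Lipschitz constant of $\eta\mapsto y^\ast(\eta)$, measured through $\tilde{A}$, is of order $\tilde{R}/\rho_{\max}^2$. Balancing the strong-concavity gain $\mu$ against this error and the cross-term bound then requires $\mu\tilde{R}/\rho_{\max}^2 \le 1/2$, which is the stated condition $\mu \le \rho_{\max}^2/(2\tilde{R})$. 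Under it we obtain $\mathbb{E}\Phi^{t+1}\le q\,\mathbb{E}\Phi^t$ with $q<1$, and iterating gives linear convergence.

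The main obstacle is this third step. The inner argmax of an LP is piecewise constant in $\eta$, not Lipschitz, so the sensitivity bound cannot be a pointwise Lipschitz estimate. I would follow the same device as in the proof of Theorem~\ref{Theo_2_simple}: work with the smoothed/penalized objective, whose strong convexity in $\tilde{A}y$ supplies the needed regularity. I would check that replacing $R$ by $\tilde{R}$ and the identity coupling by $\tilde{A}$ changes only constants, namely the factor $\rho_{\max}^2$ from $\|\tilde A^\top \tilde A\|$.
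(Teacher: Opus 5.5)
\textbf{Verdict: there is a genuine gap, at both hard points you flag (the sensitivity step and uniqueness). The device you plan to borrow from the proof of Theorem~\ref{Theo_2_simple} is not what the paper uses.}

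\textbf{Sensitivity step.} In your third step, the map $\eta\mapsto y_i^\ast(\eta)$ returned by the exact inner solve of Lemma~\ref{lemma_1} is piecewise constant in $\eta$. (Your reduced prices $r_j-(\tilde B^\top\eta)_j$ are the right ones.) The map jumps at ties: two reduced prices crossing, or one crossing the inclusion threshold. An \emph{upper} bound $\tilde R$ on reduced prices says nothing about how close $\eta^t$ is to such a switch. So no sensitivity constant of order $\tilde R/\rho_{\max}^2$ follows, and the balancing that produces $\mu\tilde R/\rho_{\max}^2\le 1/2$ has no estimate behind it. Without that bound, the $(1-\Theta(B/n))$ primal factor in your second step is also unsupported: resetting a sampled block to its best response to $\eta^t$ need not move it toward $\bar y_{\mu,i}$.

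\textbf{What the paper does instead.} The paper never bounds best-response sensitivity. It proves Corollary~\ref{coro_new_2} by reduction to Theorem~\ref{Theo_1}, which works as follows:
\begin{itemize}
\item The exact block solve is identified with a projected block-gradient step of fixed length $\theta=C$. This uses the diameter bound $\sqrt{m}\lambda_i$ on $\mathcal{Y}_i$ and the hypothesis $\lambda_i/|\sum_j(r_j-\eta_j^t)|<C$.
\item That step is compared with a ghost step that uses $\nabla g^*(\tilde A y^t)$ in place of $\eta^t$ and contracts by $\sqrt{1-B\theta\sigma/n}$.
\item By nonexpansiveness of the projection, the difference costs at most $\theta\rho_{\max}\|\eta^t-\nabla g^*(\tilde A y^t)\|$.
\item The dual is tracked through this residual rather than through $\|\eta^t-\bar\eta_\mu\|$. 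This gives a $2\times 2$ linear recursion whose only Lipschitz constant is $\rho_{\max}/\mu$, that of $\nabla g^*$.
\end{itemize}
The corollary's own proof then checks full row rank of the full augmented $\tilde A$: resource, demand and ratio rows with slack identity blocks. This is not just your resource block $[\tilde B,\dots,\tilde B]$, so your $\rho_{\max}$ is not the statement's. It then substitutes $\tilde R$ into the binding condition $\sqrt{1-B\tau\mu/n}+C\rho_{\max}^2/\mu<1$.

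Two cautions if you follow this route:
\begin{itemize}
\item The reduced-price hypothesis is needed there as a \emph{lower} bound on $|\sum_j(r_j-\eta_j^t)|$, not an upper bound.
\item The resulting condition $1/\tilde R<(1-\sqrt{1-B\tau\mu/n})\,\mu/\rho_{\max}^2$ has right-hand side at most $\mu/\rho_{\max}^2$. It therefore fails for every $\mu\le\rho_{\max}^2/(2\tilde R)$, so you cannot recover the stated threshold by copying it.
\end{itemize}

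\textbf{Uniqueness step.} This step cannot be completed as planned, and there is no argument in the proof of Theorem~\ref{Theo_2_simple} to import. The penalty curves the objective only along the row space of $\tilde A$. The linear objective together with $\mathcal{Y}$ does not fix the remaining directions, as this example shows:
\begin{itemize}
\item Take two customers with identical weights and one product whose capacity $c_1$ is below the maximal total demand $2w_{11}/(w_{10}+w_{11})$, with $\mu$ small.
\item The penalized optimum determines only the total $y_{11}+y_{21}$, which lies strictly below $2w_{11}/(w_{10}+w_{11})$.
\item Every feasible split of that total is optimal, since neither $r^\top y$ nor the resource consumption changes.
\item The corollary's hypotheses hold here (take $\tilde R=r_1$, since $\eta^t\ge 0$).
\end{itemize}
So the existence of \emph{the} unique optimum already needs an assumption beyond those stated.

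The paper obtains uniqueness, and the modulus $\sigma=\rho_{\min}^2/\mu$ used in the ghost contraction, by asserting that $g^*(\tilde A y)$ is strongly convex in $y$ because $\tilde A$ has full row rank. For a wide matrix that gives curvature only on the row space, which is precisely your worry. Any proof along your lines, or the paper's, therefore needs an additional hypothesis, and your proposal does not supply one.
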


Corollary 2 directly extends the results of Theorem~\ref{Theo_1}, the full details and proof of which are deferred to the Online Appendix.
Intuitively, incorporating the resource-request matrix $\tilde{B}$ generalizes the structure of the standard constraint matrix $A$ while preserving its full row rank property. 
By characterizing the spectral properties of this augmented matrix, we adapt the conditions of Theorem~\ref{Theo_1} to verify the linear convergence of Algorithm~\ref{alg:sda_non} in the bundling context.
These findings reaffirm the robustness and efficiency of the algorithm even under network-based resource-sharing constraints.

\section{Computational Case Study for ZOZOTOWN}\label{sec6:case}
We evaluate the proposed method in a large-scale empirical setting using real-world data from ZOZOTOWN. 
In particular, we compare two approaches: global recommendation optimization and segmented recommendation optimization.
Further details regarding the experimental setup and specific data preprocessing are provided in Online Appendix~\ref{App_2}.

\subsection{Empirical Context and Data Overview}
Customers browsing a product on ZOZOTOWN are presented with a panel of recommended items.
We define a customer’s action of clicking and subsequently adding a recommended item to the cart as a realized choice (treated as a ``purchase'' record for simplicity), a convention that is consistent with recent work incorporating click behavior into choice models to improve the prediction of customer decisions \citep{aouad2025click}.
Only items with available inventory are displayed, ensuring that all observed choices are feasible.
Our analysis uses large-scale proprietary transaction data from a single day in June 2024.
The dataset contains the 1000 most popular SKUs and records \num{1372671} realized choices made by \num{38746} unique customers.
For our sequential optimization framework, the 24-hour period is discretized into 96 intervals of 15 minutes each, which defines our operational horizon.
This temporal segmentation is purely methodological, does not reflect any inherent customer or product characteristics, and also supports standard anonymization of customer and product information.

\subsection{Estimation of Preference Weights in the Choice Model}
In this paper, we estimate customer–SKU preference weights using maximum likelihood estimation (MLE). 
We adopt the standard multinomial logit (MNL) specification for its tractability in large-scale settings \citep{berbeglia2022comparative}.
To capture preference heterogeneity, we first partition users into $K$ behavioral segments using $k$-means clustering on standardized customer features, where $K$ is determined through an iterative refinement process to ensure sufficient sample sizes for reliable estimation.
We then estimate a separate MNL model for each segment $C_\kappa$ (where $\kappa = 1, \dots, K$). 
The preference weight $w_{ij}$ for customer $i$ belonging to segment $C_\kappa$ is modeled as:
\[
w_{ij} = \alpha_\kappa + x_j^{\top} \boldsymbol{\theta}_\kappa + \varepsilon_{ij} \quad \text{for each} \quad i \in C_{\kappa},
\]
where $\alpha_\kappa$ and $\boldsymbol{\theta}_\kappa$ denote the intercept and marginal utility coefficients specific to segment $\kappa$.
The feature vector $x_j$ consists of the product's scaled price and one-hot encoded color attributes. 
Assuming i.i.d. Gumbel shocks, the MNL probabilities are computed using a sampled choice set that includes the observed chosen product and several randomly selected non-chosen alternatives.
While more flexible structures (e.g., mixed logit) exist, they are computationally infeasible at our dataset’s scale.
The segmented MNL model achieves a practical balance between behavioral interpretability, estimation stability, and computational tractability, providing consistent preference estimates for evaluating our global optimization framework, \texttt{SPFOM}.

\subsection{Large-Scale Market Optimization via \texttt{SPFOM}}

In this section, we compare the platform’s expected revenue and its inventory overload ratio under the two recommendation approaches over a fixed multi-period horizon. 
The results in Table~\ref{Tab:Case_1} represent the average performance across 96 time periods (15-minute operational batches).
{\small
\begin{table}[tbp]
\centering
\caption{
Comparison of performances of global optimization and MSD approaches.
}
\renewcommand*{\arraystretch}{1.2}
\begin{tabular}{c|ccccc}
\toprule
\raisebox{0mm}{Metric} &  MSD($\#S=5$)   &  MSD($10$)  & MSD($25$)  &  MSD($50$) & \textbf{GO} \\ \midrule
Total expected revenue & $4.2 \mathrm{e}7$  & $3.3 \mathrm{e}7$ & $2.4 \mathrm{e}7$ & $1.7\mathrm{e}7$ & $\textbf{7.6e7}$  \\ 
Inventory overload ratio  & $0.83\%$  & $1.25\%$ & $1.87\%$ &  $2.18\%$  & $\textbf{0.39\%}$   \\
\bottomrule
\end{tabular}
\label{Tab:Case_1}
\end{table}
}
The table shows the superior performance of the centralized GO approach, which achieves the highest total expected revenue ($7.6\mathrm{e}7$) while maintaining the lowest inventory overload ratio ($0.39\%$). 
Conversely, the MSD performance degrades significantly as the number of segments ($\#S$) increases (revenue drops from $4.2\mathrm{e}7$ at $\#S=5$ to $1.7\mathrm{e}7$ at $\#S=50$).
This outcome confirms that decentralization introduces inherent inefficiency under resource constraints. 
The GO strategy, with its unified global view, enforces the shared inventory constraint optimally, resulting in higher revenue and a reduced risk of stockouts. 
MSD, however, suffers from fragmented resource allocation and the accumulation of local estimation errors, causing a catastrophic loss in revenue potential and a severe violation of the global inventory constraint as market granularity increases.

\subsection{Multi-Period Optimization via Bid-Price Control}
This section compares the time-averaged results of the centralized GO strategy against the decentralized MSD strategy to determine their respective capacities for balancing immediate revenue against long-term inventory preservation.
\begin{figure}[tbp]
\centering
\caption{Comparison of GO and MSD across 96 multi-period batches.
} 
\subfigure[Cumulative revenue]{\label{speed_a}\includegraphics[width=60mm]{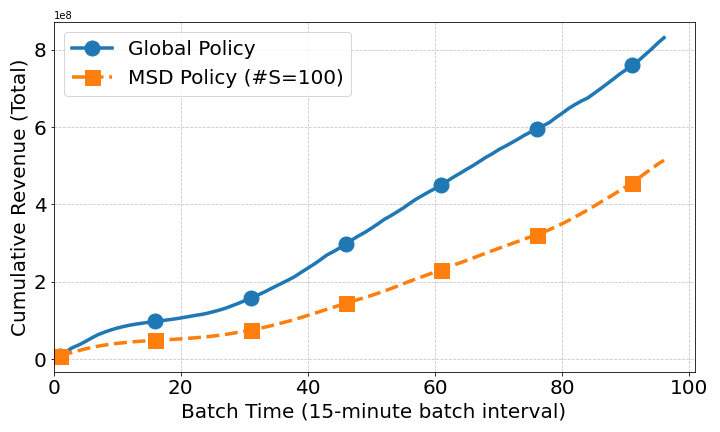}}
\hspace{0mm}
\subfigure[Revenue per batch]{\label{speed_b}\raisebox{0mm}{%
      \includegraphics[width=60mm]{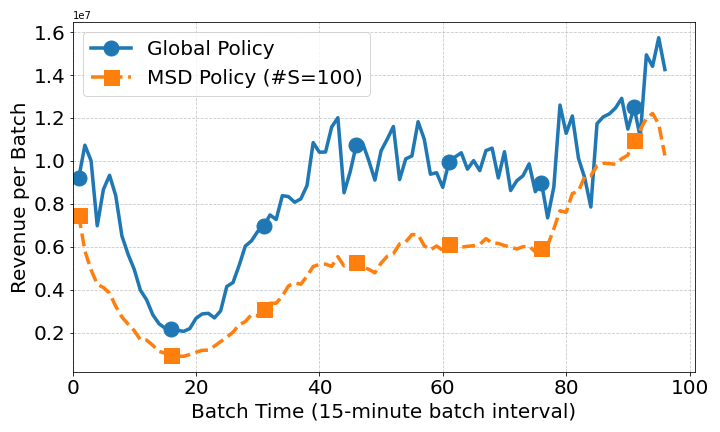}}}
\hspace{0mm}
\subfigure[Remaining inventory]{\label{speed_c}\raisebox{0mm}{%
      \includegraphics[width=60mm]{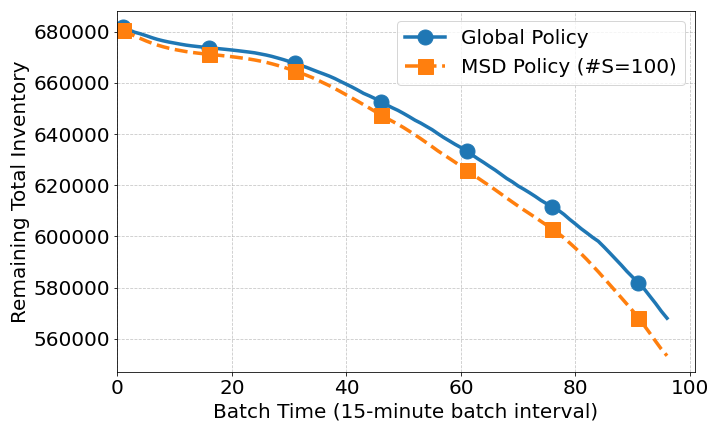}}}
\hspace{0mm}
\subfigure[Quantity sold per batch]{\label{speed_c}\raisebox{0mm}{%
      \includegraphics[width=60mm]{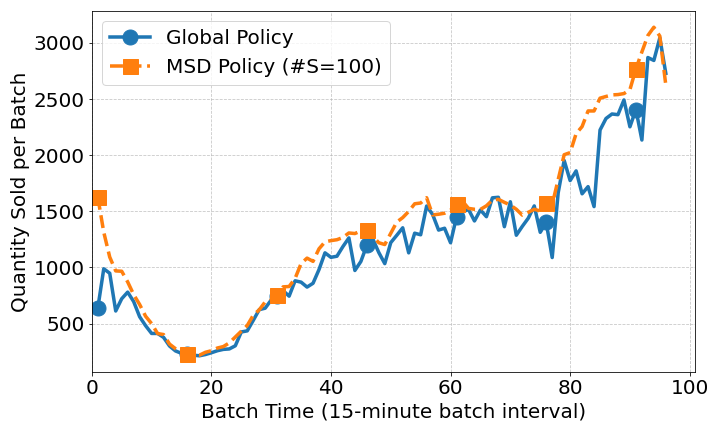}}}
\label{Fig:RealMulti}
\end{figure}
As shown in Figure~\ref{Fig:RealMulti}, the Global Policy (GO) significantly outperforms the MSD Policy in generating revenue and fulfilling demand. 
Specifically, Figure~\ref{Fig:RealMulti}(a) and (b) reveal substantially higher cumulative revenue and revenue per batch for GO. 
Figure~\ref{Fig:RealMulti}(d) shows that GO achieves a lower quantity sold per batch, leading to a slower depletion of inventory, as illustrated in Figure~\ref{Fig:RealMulti}(c).
The GO policy outperforms MSD by internalizing global inventory scarcity through a unified shadow price, avoiding over-allocation to low-value transactions. 
In contrast, MSD’s decentralized structure assigns effectively zero scarcity cost, leading to systematic misallocation and stockouts. 
Two diagnostic metrics in Online Appendix~\ref{App_secB4}, product sales diversity (entropy) and mean opportunity cost, further highlight these allocation differences through the distribution of chosen products.

\section{Conclusions and Future Research}\label{sec7:conclusion}

We propose \texttt{SPFOM}, a novel first-order algorithm designed to efficiently solve large-scale RM problems with combinatorial assortment constraints.
The method achieves theoretical guarantees while exhibiting competitive empirical performance, scaling effectively to high-dimensional instances where conventional solvers fail to remain practical.
In addition to its computational scalability, \texttt{SPFOM} facilitates fine-grained recommendation control through dual-based bid price policies.
Globally coordinated decision-making enabled by \texttt{SPFOM} yields superior outcomes compared to decentralized, segment-based approaches, achieving higher revenue and more efficient inventory utilization.
The proposed framework can also be naturally extended to NRM problems, where decisions are coupled through shared resource capacities.
Future research directions include incorporating data-driven learning modules to jointly estimate customer preferences and optimize assortments in an adaptive fashion~\citep{ferreira2018online, jia2021multi, cheng2025social}. 
Extending the framework to online or dynamic settings would better capture the operational realities of real-time systems.
Another promising avenue is the integration of inventory control policies into the decision-making process~\citep{liang2021assortment}, thereby enhancing coordination between demand learning, assortment optimization, and stock management, particularly under supply constraints or uncertain replenishment dynamics.

\bibliographystyle{informs2014}
\bibliography{mybib.bib}

\ECSwitch
\OneAndAHalfSpacedXI
\ECHead{Online Appendices}
\section{Omitted Proofs}\label{App_1}

\subsection{Proof of Lemma~\ref{lemma_1}}\label{Proof:Lemma1}
Without loss of generality, we assume the indices $j$s are ordered such that $r_1-\eta_1\geq r_2-\eta_2\geq \cdots \geq r_m-\eta_m$. Then, note the optimization problem for solving $z_i$ can be solved equivalently through a dynamic programming recursion, as follows: (let $s_1=\lambda_i$)
{\small
\begin{align*}
    \left\{\begin{array}{ll}
        V_{m+1}(0):= 0 &  \\
        V_j(s_j):= & \max\limits_{y_{ij}:0\leq y_{ij}\leq \min\{\frac{w_{ij}}{w_{i0}}y_{i0},s_j\}} (r_j-\eta_j)y_{ij} \nonumber \\
        &+V_{j+1}(\underbrace{s_j-y_{ij}}_{:=s_{j+1}}), \forall~j=1,\ldots,m
    \end{array} \right..
\end{align*}
}
By an induction argument, we can show that $V_j'(\cdot)$ is bounded above by $r_j-\eta_j$. Then, we derive the greedy approach (try to set $y_{ij}$ as large as possible) must be optimal due to the ordering of $r_j-\eta_j$.
\hfill$\blacksquare$

\subsection{Proof of Theorem~\ref{Theo_1_old}}\label{Proof:Theo_1_old}
We follow the notation used in this paper. To apply the quadratic penalty theory from \citet{NocW06a}, we first define a base feasible set $\mathcal{\bar{Y}}$ incorporating constraints (7), (8), and (9):
\begin{align*}
\mathcal{\bar{Y}} = \Bigg\{ y_{ij} \mid \sum_{j=0}^m y_{ij} = \lambda_i, \frac{y_{ij}}{w_{ij}} \le \frac{y_{i0}}{w_{i0}}, &y_{ij} \ge 0, \\
&\forall i\in [n], j\in [m] \Bigg\}.
\end{align*}
Since $\lambda_i$ and $w_{ij}$ are fixed, finite, and positive parameters, $\mathcal{\bar{Y}}$ is a closed and bounded set, hence it is compact.
To address the resource inequality constraints \eqref{eq:sales-based_1}, we introduce non-negative slack variables $s_j \ge 0$ to reformulate them as equalities: $c_j - \sum_{i \in [n]} y_{ij} - s_j = 0$.
Following the construction in Theorem 17.1 of \citet{NocW06a}, the augmented objective function $Q$ for our problem is defined as:
\begin{align}
\min_{\substack{\eta_j \\ j\in [m]}} 
    \max_{\substack{y_{ij} \in \mathcal{\bar{Y}} \\ i\in [n] \\ j\in \{0\}\cup[m]}} 
&\sum_{i\in [n]}\sum_{j\in [m]} y_{ij} r_j \nonumber  \\ 
&+ \sum_{j\in [m]}\eta_j \left(c_j - \sum_{i\in [n]} y_{ij} - s_j\right) + \frac{\mu}{2}\sum_{j\in [m]}\eta_j^2,  \label{eq:proof_saddle}
\end{align}
where $\frac{\mu}{2}\sum\limits_{j\in [m]}\eta_j^2$ is the quadratic regularization term. 
To eliminate $\eta$, we find the value $\eta_j^*$ that minimizes this expression. 
Taking the partial derivative of \eqref{eq:proof_saddle} with respect to $\eta_j$ and setting it to zero:
\begin{align*}
\frac{\partial \eqref{eq:proof_saddle}}{\partial \eta_j} = c_j - \sum_{i \in [n]} y_{ij} - s_j + \mu \eta_j = 0 \quad \text{for each } j \in [m].
\end{align*}
Solving for the optimal dual variable, we get $\eta^*_j = \left(\sum\limits_{i \in [n]} y_{ij} + s_j - c_j \right) / \mu$. 
Substituting this back into the objective function, we obtain the regularized objective \eqref{eq:new_obj}.
We note that as $\mu \rightarrow 0$, the effective penalty coefficient $1/\mu$ tends to infinity, which is equivalent to the case in the original Theorem 17.1 of \citet{NocW06a} where the penalty parameter goes to infinity. 
Since $\mathcal{\bar{Y}}$ is compact, any sequence of solutions $\{\bar{y}_{\mu}\}$ has at least one limit point. 
According to Theorem 17.1, every such limit point must satisfy the constraints and be an optimal solution to the original problem (6).
\hfill$\blacksquare$

\subsection{Preliminaries on Linear Convergence}

We introduce some notations.
By rewriting in matrix form, the problem~\eqref{entro_obj}-\eqref{entro_cons_4} considered is equivalent to
\[
	\min_{y \in \bY} \max_{\eta \geq 0}\, -\left( r^{\top} y +
	\eta^{\top} (c - Ay) + \frac{\mu}{2}\eta^{\top} \eta\right),
\]
where $y := (y_1,y_2,\dotsc,y_n)$, $y_i := (y_{i0},y_{i1},\dotsc,y_{im})$, $r \in \R^{m(n+1)}$ is the vector concatenating $(0,r_1,r_2,\dotsc,r_m)$ for $n$ times, $\bY := \bY_1\times\cdots\times \bY_n$, where each $\bY_i$ denotes the feasible set of the constraints eqs.~\eqref{entro_cons_1} to \eqref{entro_cons_3} for $y_i$, and $A \in \R^{m \times m(n+1)}$ is formed such that each row is split into $m$ blocks of $n+1$ entries, and the $i$-th row is formed by repeating $(0,e_i)$ for $m$ times, where $e_i$ is the $i$-th standard unit
vector of $\R^{n}$.

\begin{theorem}\label{Theo_1}
Consider Algorithm~\ref{alg:sda_non}
for some $\mu \in (0,1)$. 
Let $\rho_{\min}$ and $\rho_{\max}$ respectively be the smallest and the
largest singular values of $A$.
Assume that $A$ has full row rank, there is a constant $C \in (0, 2
\mu B / (n (\rho_{\min}^2 + \rho_{\max}^2)))$ such
that $\lambda_i / |\sum_{j} r_j - \eta^t_j| < C$
for any $i$ for all $t$.
If $C, \mu, \tau$ and $B$ satisfy
{\small
\begin{align*}
	& \tau \leq \mu^{-1}, \quad \sqrt{1 - \frac{B C
		\rho_{\min}^2}{n\mu}} < 1, \quad C \rho_{\max} < 1, \\
        & C
		\frac{\rho_{\max}^3}{\mu^2} < 1,  
        \quad \sqrt{1 - \frac{B \tau
		\mu}{n}} + C \frac{\rho_{\max}^2}{\mu} < 1
\end{align*}
}
then the expected distance from the iterates $\{y^t\}$ to $\bar
y_{\mu}$ converges at a linear rate. More specifically,
let $\lambda := \max_{i \in [n]} \lambda_i$,
there exists $c>0$ such that $0\leq q \leq 1-c\cdot(B/n)$ such that
{\small
\begin{align*}
\mathbb{E}[\|y^t - \bar{y}_{\mu}\|]^2 \leq & q^t \bigg(
3 \|\eta^0\|^2 +
\frac{3}{\mu}\|c\|^2 + \frac{3}{\mu} m^2 (n+1) \|y_0\|^2  \\
&+ \lambda^2 + 3 \mu^2 \|\eta^0\|^2 + 3\|c\|^2 + m(n+1) \lambda \bigg).
\end{align*}
}
\end{theorem}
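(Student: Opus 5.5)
We follow the strategy of Theorem~3.1 of \citet{du2019linear}, treating the regularized saddle problem as a stochastic primal--dual iteration with randomized block updates on the primal side and a deterministic projected gradient step on the dual side. The unique saddle point $(\bar y_\mu,\bar\eta_\mu)$ exists because the $\frac{\mu}{2}\|\eta\|^2$ term makes the problem strongly concave in $\eta$. Uniqueness of the $y$-component is argued via the full-row-rank assumption on $A$ and the optimality condition $\bar\eta_\mu = [(A\bar y_\mu - c)/\mu]_+$.

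The argument tracks two error quantities. The dual error is $\Delta_\eta^t := \|\eta^t-\bar\eta_\mu\|$ and the primal error is $\Delta_y^t := \|y^t-\bar y_\mu\|$. They are coupled by a joint potential, and we aim for a recursion of the form
\[
\begin{pmatrix}\mathbb{E}\Delta_\eta^{t+1}\\ \mathbb{E}\Delta_y^{t+1}\end{pmatrix}
\le M \begin{pmatrix}\mathbb{E}\Delta_\eta^{t}\\ \mathbb{E}\Delta_y^{t}\end{pmatrix}.
\]
Here $M$ is a $2\times 2$ nonnegative matrix whose spectral radius is below $1-c\,B/n$.

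\textbf{Dual step.} The update \eqref{eq:stronglycvxupdate} is a projected gradient step on a $\mu$-strongly convex function of $\eta$. By nonexpansiveness of $\max\{0,\cdot\}$ we get
\[
\|\eta^{t+1}-\bar\eta_\mu\|\le |1-\tau\mu|\,\|\eta^t-\bar\eta_\mu\| + \tau\|A(y^{t+1}-\bar y_\mu)\|,
\]
and $\tau\le\mu^{-1}$ keeps the first factor in $[0,1)$. The last term is bounded by $\tau\rho_{\max}\Delta_y$. Combined with the sampling factor, this yields the entry $\sqrt{1-B\tau\mu/n}$ in $M$.

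\textbf{Primal step.} Each customer is sampled with probability $B/n$, and a sampled block $y_i$ is replaced by the exact maximizer $y_i^\star(\eta)$ of its inner problem (Lemma~\ref{lemma_1} plus the search over $y_{i0}$). Unsampled blocks stay fixed. Hence
\[
\mathbb{E}\|y^{t+1}-\bar y_\mu\|^2 = (1-\tfrac{B}{n})\|y^t-\bar y_\mu\|^2 + \tfrac{B}{n}\|y^\star(\eta^t)-\bar y_\mu\|^2 .
\]
The key ingredient is a Lipschitz bound of the best-response map in $\eta$, namely $\|y^\star(\eta)-y^\star(\bar\eta_\mu)\|\le L\|\eta-\bar\eta_\mu\|$. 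This is where the assumption $\lambda_i/|\sum_j r_j-\eta_j^t|<C$ enters. Through the greedy structure of \texttt{FAST-INNER}, a perturbation of the effective prices $r_j-\eta_j$ moves $y_i$ by at most on the order of $\lambda_i$ over the price scale, so $L\lesssim C\rho_{\max}$ after summing blocks through $A$. Replacing $\Delta_\eta$ by its expression in terms of $A(y-\bar y)$ and using $\rho_{\min}$ from the full-rank assumption gives the factor $\sqrt{1-BC\rho_{\min}^2/(n\mu)}$. The cross terms $C\rho_{\max}$, $C\rho_{\max}^3/\mu^2$ and $C\rho_{\max}^2/\mu$ are the off-diagonal entries of $M$.

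\textbf{Closing the argument.} The listed inequalities make every row sum of $M$ (in a suitably weighted norm) less than $1$, so its spectral radius satisfies $q\le 1-c\,B/n$. Unrolling the recursion gives the rate $q^t$. The constant in front comes from bounding the initial potential by $\|a+b+d\|^2\le 3(\|a\|^2+\|b\|^2+\|d\|^2)$. The terms are $\|\eta^0\|$, $\|\bar\eta_\mu\|\le(\|c\|+\|A\|\|\bar y_\mu\|)/\mu$ with $\|A\|^2\le m^2(n+1)$, and $\|\bar y_\mu\|^2\le m(n+1)\lambda$ from $y\in\bY$.

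\textbf{Main obstacle.} The hardest step is the Lipschitz/sensitivity bound for the best response $y_i^\star(\eta)$. The inner LP solution is piecewise constant with jumps when the ordering of $r_j-\eta_j$ changes. I would first try to work with the regularized value function's gradient, using the $\mu$-smoothing to obtain a contraction on average. Failing that, I would bound the jump sizes by $\lambda_i$ and absorb them using the hypothesis on $C$.
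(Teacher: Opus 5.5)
There is a genuine gap, and it is the step you flag as the main obstacle. The best-response map $\eta\mapsto y^\star(\eta)$ produced by the exact inner solve is not Lipschitz, and the hypothesis $\lambda_i/|\sum_j r_j-\eta^t_j|<C$ cannot make it so. For fixed $\eta$ the block problem is an LP over $\bY_i$. Its solution therefore jumps between vertices (by an amount of order $\lambda_i$, up to the diameter $\sqrt m\,\lambda_i$) under arbitrarily small perturbations of $\eta$, whenever the ranking of the $r_j-\eta_j$ changes. A jump whose size does not shrink with $\|\eta^t-\bar\eta_\mu\|$ cannot be absorbed into a linear recursion, so ``bounding the jumps by $\lambda_i$'' gives at best a noise floor, not a rate.

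Your decomposition also presupposes $y^\star(\bar\eta_\mu)=\bar y_\mu$. However, the per-customer LP at $\bar\eta_\mu$ need not have a unique solution, and the greedy vertex need not be $\bar y_\mu$. The $\mu$-smoothing does not rescue this. The term $\frac{\mu}{2}\|\eta\|^2$ makes $y\mapsto\nabla g^*(Ay)$ Lipschitz with constant $\rho_{\max}/\mu$, i.e.\ it regularizes the map from $y$ to $\eta$, not the map from $\eta$ to $y$. Relatedly, your account of the factor $\sqrt{1-BC\rho_{\min}^2/(n\mu)}$ (``replacing $\Delta_\eta$ by its expression in terms of $A(y-\bar y)$'') has no mechanism behind it. The hypothesis $C<2\mu B/(n(\rho_{\min}^2+\rho_{\max}^2))$ also plays no role in your argument.

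The missing idea is the paper's ghost-iterate argument, which never needs sensitivity of the argmax. The paper uses the hypothesis on $C$ for a different purpose. Since $\bY_i$ has diameter at most $\sqrt m\,\lambda_i$ and the block objective is linear, it argues that the exact block solve coincides with a projected block-gradient step $y^{t+1}=P_{\bY}\bigl(y^t-\theta S_t(\nabla f(y^t)+A^\top\eta^t)\bigr)$ with fixed step $\theta=C$. It then introduces the ghost step $\tilde y^{t+1}$, in which $A^\top\eta^t$ is replaced by $A^\top\nabla g^*(Ay^t)$. This is a genuine randomized projected-gradient step on $f+g^*(A\,\cdot)$ with $\sigma=\rho_{\min}^2/\mu$ and $L=\rho_{\max}^2/\mu$. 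It contracts by $1-B\theta\sigma/n$ as long as $\theta\le 2B/(n(\sigma+L))$, which is exactly where $C_1=\sqrt{1-BC\rho_{\min}^2/(n\mu)}$ and the upper bound on $C$ come from.

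The discrepancy between the two steps is handled by nonexpansiveness of the projection: $\|y^{t+1}-\tilde y^{t+1}\|\le\theta\rho_{\max}\|\eta^t-\nabla g^*(Ay^t)\|$. A $1$-Lipschitz projection at a fixed step size is what replaces your nonexistent Lipschitz best response. Accordingly, the paper measures the dual error against the moving target $\nabla g^*(Ay^t)$ rather than $\bar\eta_\mu$. It propagates that error via a bound on $\Ex\|y^{t+1}-y^t\|$ and the $\rho_{\max}/\mu$-Lipschitz continuity of $\nabla g^*\circ A$. This gives $C_2=C\rho_{\max}$, $C_3=C\rho_{\max}^3/\mu^2$ and $C_4=\sqrt{1-B\tau\mu/n}+C\rho_{\max}^2/\mu$, so $C\rho_{\max}^2/\mu$ enters a diagonal entry, not an off-diagonal one. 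Finally, the initial quantity $b_0=\|\eta^0-\nabla g^*(Ay^0)\|$ is what produces the $\|y_0\|^2$ term in the stated constant. A potential built on $\|\bar\eta_\mu\|$ and $\|\bar y_\mu\|$ would not reproduce it.
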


\textit{Proof of Theorem~\ref{Theo_1}.}
We first reformulate the problem via the conjugate function $g^*$.
The primal objective is $\mu$-strongly convex and $\mu$-smooth, implying that $g^* (Ay)$ is $\sigma$-strongly convex and $L$-smooth with respect to $y$.
We then identify a suitable step size $\theta$ for the block coordinate gradient descent update.
Under this step size, we construct a ghost algorithm and, using existing results on randomized block coordinate descent, show that the primal error $\Ex \|y^{t} - \bar y_{\mu}\| $ contracts linearly in expectation.
Next, we analyze the dual update and derive a linear contraction bound for the dual residual $\Ex\|\eta^{t} - \nabla g^*(Ay^{t})\|$ by combining the primal contraction with the Lipschitz continuity of the dual gradient.
Finally, the coupled primal–dual recursions yield the claimed linear convergence rate.

We define
\[
	f(y) := -r^{\top} y,  \quad g(\eta)
	:= \frac{\mu}{2} \eta^{\top} \eta + c^{\top} \eta,
\]
then we can easily see that $\|A\| \leq m\sqrt{n+1}$, and $g$ is
$\mu$-strongly convex and $\mu$-smooth, while $f$ is $0$-smooth in the
sense of \citet{bubeck2015convex} as it is a linear function of $y$.
We see that the conjugate function \[ g^*(A y) := \max_{\eta}
\eta^{\top} Ay - g(\eta),\]
is therefore $\sigma$-strongly convex and $L$-smooth
with respect to $y$,
where $\sigma := \rho_{\min}^2\mu^{-1}$ and
$L := \rho_{\max}^2\mu^{-1}$.
With the definition of $g^*$, we also see that the problem considered
is equivalent to
\begin{equation}
	\min_{y \in \bY} f(y) + g^*(y).
	\label{eq:conjform}
\end{equation}
Since the objective \cref{eq:conjform} is strongly convex,
we know that there is a unique optimal solution $\bar y_{\mu}$.

We can easily see that the diameter of $\bY_i$ is upper bounded by
$\sqrt{m} \lambda_i$ from the constraints.
Therefore, it is clear that as long as $r_i - \eta_i \neq 0$ and we
take a step size
\[
	\theta_i \geq \frac{2 \sqrt{m} \lambda_i}{r_j - \eta_j},
\]
block coordinate gradient descent at the $i$-th block of $y$ gives the exact solution of the subproblem with respect to $y_i$.
On the other hand, if $r_i - \eta_i = 0$, any $y_i$ and thus any step size is optimal because the objective value remains a constant.
Therefore, from our assumption of the upper-boundedness of $\lambda_i/|r_j - \eta_j|$, our exact subproblem solution is equivalent to conducting the
project gradient descent at the selected blocks with any step size $\theta \geq C$.

Now we consider a ghost algorithm
\begin{equation}
	\label{eq:ghost}
	\tilde y^{t+1} = P_{\bY} \big( y^t - \theta S_t (\nabla f(y^t) + \nabla
	g^*(y^t)) \big),
\end{equation}
where $S_t$ is a diagonal matrix designating the blocks selected by our algorithm at the $t$-th iteration.
It is clear that
\begin{equation}
	\Ex[S_t] = \frac{B}{n} I, \quad \|S_t\| = 1.
	\label{eq:S}
\end{equation}
Therefore, by using the proof of Theorem~3.12 in \citet{bubeck2015convex} and the non-expansiveness of convex projections, we can easily obtain
\begin{equation}
	\Ex \|\tilde y^{t+1} - \bar y_{\mu}\|^2 \leq (1 -
	\frac{B\theta \sigma}{n}) \|y^t - \bar y_{\mu}\|^2,
	\label{eq:descent}
\end{equation}
provided that $\theta \leq 2B / (n(\sigma + L))$.

On the other hand, we can easily see that $\tilde y^{t+1} - y^{t+1} = \theta S_t A^{\top} ( \eta^t - \nabla g^*(Ay^t) )$, and therefore by
defining $C_1 := \sqrt{1 - B\theta \sigma / n}$,
\cref{eq:descent} implies that
\begin{align}
	\Ex \|y^{t+1} - \bar y_{\mu}\| 
    &= \Ex \| \tilde y^{t+1} - \bar y_{\mu}\| + \Ex \| y^{t+1} - \tilde y^{t+1} \| \nonumber\\
    &\leq C_1 \|y^t - \bar y_{\mu}\| + \theta \rho_{\max}
	\|\eta^t - \nabla g^*(Ay^t)\|.
	\label{eq:tobound}
\end{align}
With routine calculations, similar to Proposition~3.3
of \citet{du2019linear}, we also have that
\begin{equation}
	\Ex \|y^{t+1} - y^t\| \leq L \theta \|y^t - \bar y_{\mu}\| + \rho_{\max} \theta
	\|\eta^t - \nabla g^*(Ay^t)\|.
	\label{eq:prop33}
\end{equation}

Next, for the dual variables, we see that the update is over all coordinates without any stochastic elements.
Therefore, it can be seen as conducting gradient descent for the following objective function
\[
\tilde g(\eta) := g(\eta) - \eta^{\top} A y^t,
\]
and it is clear that $\tilde g$ is $\mu$-strongly convex and
$\mu$-smooth just like $g$.
Therefore, using \cref{eq:prop33} and again from the proof of Theorem~3.12 in \citet{bubeck2015convex}, for any $\tau \leq 1/\mu$, we can obtain after routine calculations that
\begin{align}
	\label{eq:dual}
	&\Ex\|\eta^{t+1} - \nabla g^*(Ay^{t+1})\|  \nonumber \\
    =& \Ex\|\eta^{t+1} - \nabla g^*(Ay^{t})\| + \Ex\| \nabla g^*(Ay^{t+1}) - \nabla g^*(Ay^{t})\| \nonumber \\
    \leq &  \underbrace{ \sqrt{1 - B \tau \mu/n} \| \eta^{t} - \nabla g^*(Ay^{t})\|  }_{ \text{Theorem~3.12 in \citet{bubeck2015convex}} } + \underbrace{ \frac{\rho_{\max}}{\mu} \| y^{t+1} - y^{t} \|  }_{ \text{Lipschitz Continuity} } \nonumber \\
    \leq & \sqrt{1 - B \tau \mu/n} \| \eta^{t} - \nabla g^*(Ay^{t})\| \nonumber \\ 
    & + \frac{\rho_{\max}}{\mu} \bigg(\underbrace{ L \theta \|y^t - \bar y_{\mu}\| + \rho_{\max} \theta
	\|\eta^t - \nabla g^*(Ay^t)\|  }_{ \eqref{eq:prop33} } \bigg)  \nonumber \\
    \leq & C_4	\|\eta^t - \nabla g^*(Ay^t)\| + L \theta \|y^t - \bar y_{\mu}\| , 
\end{align}
where $C_4 := \sqrt{1 - B \tau \mu / n} + L \theta$.

Summarizing \cref{eq:dual} and \cref{eq:tobound}, taking expectations
over all iterations, and by defining $a_t := \Ex\|y^t - \bar y_{\mu}\|$ and
$b_t := \Ex \|\eta^t - \nabla g^*(Ay^t)\|$, we have the following
recursion.
\begin{equation}
	\begin{cases}
		a_{t+1} &\leq C_1 a_t + C_2 b_t,\\
		b_{t+1} &\leq C_3 a_t + C_4 b_t,
	\end{cases}
	\label{eq:recursion}
\end{equation}
where $C_2 := \theta \rho_{\max}, C_3 := \rho_{\max} L \theta / \mu$.
From our assumptions and by setting $\theta = C$, we have that $C_1,
C_2, C_3, C_4 \in [0,1)$. Let us define $q \coloneqq \sqrt{\max\{C_1, C_2,
C_3, C_4\}}$, then clearly
\[
a_t^2 \leq a_t^2 + b_t^2 \leq q^t (a_0^2 + b_0^2)
= q^t \left(a_0^2 + b_0^2  \right).
\]
Finally, we just need to bound $a_0^2 + b_0^2$.
It is clear that $b_0^2 = \|\eta^0 - \mu^{-1} Ay^0 - \mu^{-1} c\|^2
\leq 3 (\|\eta^0\|^2 + \mu^{-2} \|c\|^2 + \mu^{-2} \|A\|^2 \|y^0\|^2)$
and $a_0$ is upper bounded by the diameter of $\bY$.
We therefore obtain the claimed result.
\hfill$\blacksquare$

\subsection{Proof of Theorem~\ref{Theo_2_simple}}\label{Proof:Theo_2_simple}

In Theorem~\ref{Theo_1}, we show that, under the existence of a constant $C$ and several conditions, Algorithm~\ref{alg:sda_non} converges linearly in the general case. 
In Theorem~\ref{Theo_2_simple}, by exploiting the special structure of the SBLP under consideration, we further simplify the conditions required in Theorem~\ref{Theo_1} and provide an explicit choice of the parameter $\mu$, ensuring linear convergence of the algorithm.

\begin{lemma}\label{Lemma_pre_1}
    The constraint matrix $A$ of the SBLP in standard form has full row rank.
\end{lemma}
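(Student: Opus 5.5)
The plan is to prove the claim by exhibiting, for every row of $A$, an entry that is nonzero in that row and zero in every other row. This is the structure of an identity-like submatrix, and it forces linear independence of the rows.

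First I fix precisely what ``the constraint matrix of the SBLP in standard form'' means, following the preliminaries before Theorem~\ref{Theo_1}. There, $A\in\mathbb{R}^{m\times m(n+1)}$ collects the coupling capacity constraints $\sum_{i\in[n]} y_{ij}\le c_j$. Its $j$-th row has a $1$ in the coordinate $y_{ij}$ for every customer $i\in[n]$, and a $0$ in every $y_{i0}$ coordinate and every $y_{ij'}$ coordinate with $j'\neq j$.

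The key structural observation is that the column sets supporting different rows are pairwise disjoint. Row $j$ is supported exactly on $\{y_{ij}:i\in[n]\}$, so each column of $A$ has at most one nonzero entry. I then argue directly. Suppose $\sum_{j}\beta_j A_{j\cdot}=0$ for some coefficients $\beta\in\mathbb{R}^m$. Reading off the column corresponding to $y_{1j}$, where only row $j$ contributes, gives $\beta_j=0$. Since this holds for every $j$, we get $\beta=0$, and so $\operatorname{rank}(A)=m$.

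Equivalently, I can compute $AA^\top = n I_m$, since the row supports are disjoint and each row has exactly $n$ ones. This second route is worth recording because it also gives $\rho_{\min}=\rho_{\max}=\sqrt{n}$. Those singular values are exactly what is needed later to simplify the conditions of Theorem~\ref{Theo_1} into the explicit choice $\mu\le 1/(2R)$ in Theorem~\ref{Theo_2_simple}.

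The main obstacle is bookkeeping rather than mathematics. The paper's verbal description of $A$ (``the $i$-th row is formed by repeating $(0,e_i)$ for $m$ times'') mixes the roles of $n$ and $m$. I would therefore state explicitly the indexing convention for $y=(y_1,\dots,y_n)$ with $y_i=(y_{i0},\dots,y_{im})$, and define $A_{j,(i,k)}=\mathbf{1}(k=j)$ for $k\in\{0\}\cup[m]$.

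If ``standard form'' is instead meant to include slack variables for the inequalities, I append the block $I_m$ to $A$; that block alone already has full row rank. If it is meant to also include the per-customer balance rows $\sum_{j=0}^m y_{ij}=\lambda_i$, I would use the $y_{i0}$ columns: these are zero in the capacity rows and hit only the balance row of customer $i$. The combined matrix is then block-triangular with respect to the column partition, and full row rank follows by the same column-reading argument.
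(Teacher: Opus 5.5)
Your proof is correct and is essentially the paper's argument: both get full row rank by exhibiting an $m\times m$ identity submatrix. The paper uses the slack block in $A=[M\mid I_m]$, while you use the columns of $y_{1j}$, $j\in[m]$, and keep the slack block as a fallback; your version therefore also covers the slack-free $A$ that enters the saddle-point formulation before Theorem~\ref{Theo_1}. Your side computation $AA^\top=nI_m$ is right (it becomes $(n+1)I_m$ with slacks), but it is not needed for this lemma and does not match the paper's downstream use, since Lemma~\ref{Lemma_pre_2} takes $\rho_{\min}=\rho_{\max}=1$; so do not present $\sqrt{n}$ as the input that yields $\mu\le 1/(2R)$ in Theorem~\ref{Theo_2_simple}.
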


\textit{Proof of Lemma~\ref{Lemma_pre_1}}.
Consider the SBLP in its standard form. 
To convert the inequality resource constraints~\eqref{entro_obj} into equalities, we introduce non-negative slack variables $s_j \geq 0$ for each $j \in [m]$. 
The constraints can then be written as $\sum_{i \in [n]} y_{ij} + s_j = c_j, \forall j \in [m]$.
Let the decision vector be defined as $\mathbf{y} = [y_{11}, \dots, y_{nm}, s_1, \dots, s_m]^\top \in \mathbb{R}^{m(n+1)}$. 
The constraint matrix $A \in \mathbb{R}^{m \times m(n+1)}$ then takes the following partitioned structure:
\[
A = \left[ 
\begin{array}{cccc|cccc}
\mathbf{1}^\top & \mathbf{0} & \dots & \mathbf{0} & 1 & 0 & \dots & 0 \\
\mathbf{0} & \mathbf{1}^\top & \dots & \mathbf{0} & 0 & 1 & \dots & 0 \\
\vdots & \vdots & \ddots & \vdots & \vdots & \vdots & \ddots & \vdots \\
\mathbf{0} & \mathbf{0} & \dots & \mathbf{1}^\top & 0 & 0 & \dots & 1
\end{array} 
\right] = [ M \mid I_m ]
\]
where $M \in \mathbb{R}^{m \times m (n+1)}$ represents the coefficients of the original variables $y_{ij}$ and $I_m \in \mathbb{R}^{m \times m}$ is the identity matrix corresponding to the slack variables. Since $A$ contains a full $m \times m$ identity matrix, its rows are linearly independent. Therefore, $\text{rank}(A) = m$, which implies that the matrix $A$ has full row rank.
\hfill$\blacksquare$

\begin{lemma}\label{Lemma_pre_2}
    The largest and smallest singular values of $A$, denoted by $\rho_{max}$ and $\rho_{min}$, can both be taken as 1.
\end{lemma}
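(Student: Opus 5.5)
The plan is to compute $AA^{\top}$ explicitly from the partitioned form $A=[M\mid I_m]$ in the proof of Lemma~\ref{Lemma_pre_1}. I will show that it is a scalar multiple of the identity, so all singular values of $A$ coincide. A rescaling of the resource constraints then normalizes this common value to $1$.

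\textbf{Step 1 (Gram matrix).} Write $AA^{\top}=MM^{\top}+I_m$. Row $j$ of $M$ is the indicator of the variables $\{y_{ij}\}_{i\in[n]}$, i.e.\ it has exactly $n$ entries equal to one. Distinct rows $j\neq j'$ have disjoint supports, since each variable $y_{ij}$ appears only in the capacity constraint of product $j$. Hence $(MM^{\top})_{jj}=n$ and $(MM^{\top})_{jj'}=0$, so
\[
AA^{\top}=(n+1)\,I_m .
\]
The $m$ nonzero singular values of $A$, which are the square roots of the eigenvalues of $AA^{\top}$, therefore all equal $\sqrt{n+1}$. This gives $\rho_{\min}=\rho_{\max}$, and $\rho_{\min}>0$, consistent with full row rank (Lemma~\ref{Lemma_pre_1}).

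\textbf{Step 2 (normalization).} Divide every capacity constraint $\sum_i y_{ij}+s_j=c_j$ by $\sqrt{n+1}$. This replaces $A$ by $\hat A=A/\sqrt{n+1}$ and $c$ by $\hat c=c/\sqrt{n+1}$, and it leaves the feasible set and the optimal $y$ unchanged. Then $\hat A\hat A^{\top}=I_m$, so the rows of $\hat A$ are orthonormal and every singular value of $\hat A$ equals $1$. In the minimax form $r^{\top}y+\eta^{\top}(c-Ay)+\tfrac{\mu}{2}\eta^{\top}\eta$, the substitution corresponds to $\hat\eta=\sqrt{n+1}\,\eta$ together with $\hat\mu=\mu/(n+1)$, which leaves the penalized problem \eqref{eq:new_obj} invariant. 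This justifies saying that $\rho_{\max}$ and $\rho_{\min}$ ``can be taken as 1'' when invoking Theorem~\ref{Theo_1}.

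\textbf{Main obstacle.} The computation itself is trivial. The delicate point is Step~2: I must check that the rescaling is compatible with the quantities appearing in the conditions of Theorem~\ref{Theo_1}, namely $\mu$, $\tau$ and $\eta^t$. I would do this by tracking explicitly how $\mu$ and $\tau$ transform, so that the later simplification in the proof of Theorem~\ref{Theo_2_simple} uses consistently rescaled parameters. I would also reconcile the indexing of $A$ described in the preliminaries with the block form $[M\mid I_m]$. Only the disjoint-support, equal-row-count structure matters, so this reconciliation does not affect the argument.
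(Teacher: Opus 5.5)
Your proof is correct, and it departs from the paper's at the decisive step. The paper also writes $AA^{\top}=MM^{\top}+I_m$, but it uses only $MM^{\top}\succeq 0$ to get $\rho_{\min}\ge 1$. It then invokes an unspecified ``standard normalization'' to set $\rho_{\min}=1$, and asserts that the row sums of $M$ keep the spectral radius of $AA^{\top}$ at most $1$. Read literally, that last claim contradicts the paper's own decomposition: the row sums are $n$, and $\Lambda_{\max}(AA^{\top})=\Lambda_{\max}(MM^{\top})+1\le 1$ would force $M=0$.

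Your exact computation $MM^{\top}=nI_m$ supplies what the paper never establishes, namely $\rho_{\min}=\rho_{\max}=\sqrt{n+1}$. This equality is precisely what lets a single scalar rescaling normalize both extremes. You also make explicit that ``taken as $1$'' is a reparametrization ($\hat\eta=\sqrt{n+1}\,\eta$, $\hat\mu=\mu/(n+1)$), not a property of the literal $A$.

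The bookkeeping you flag as the main obstacle closes cleanly:
\begin{itemize}
\item With $\hat\tau=(n+1)\tau$ and $\hat\eta^0=\sqrt{n+1}\,\eta^0$, the update \eqref{eq:stronglycvxupdate} maps onto its rescaled counterpart.
\item The reduced costs $r-\hat A^{\top}\hat\eta=r-A^{\top}\eta$ coincide, so both runs generate the same $y^t$.
\item The quantities $\tau\mu$, $\rho_{\min}^2/\mu$ and $\rho_{\max}^2/\mu$ are invariant.
\end{itemize}
The one consequence is that any condition obtained by substituting $\rho_{\min}=\rho_{\max}=1$ into Theorem~\ref{Theo_1} constrains $\hat\mu$, not the $\mu$ of the original update. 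This includes the choice of $\mu$ in Theorem~\ref{Theo_2_simple}.

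What the paper's one-sided bound buys is generality: it needs only the identity block, so it holds for any matrix of the form $[M\mid I_m]$. Your equality instead uses the disjoint, equal-size supports of the capacity rows. That structure is lost for the bundling matrix $\tilde A$ of Corollary~\ref{coro_new_2}, where the paper itself records $\rho_{\max}>\rho_{\min}$. So there, no scalar normalization can make both equal to $1$.
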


\textit{Proof of Lemma~\ref{Lemma_pre_2}}.
Following Lemma A.1, the constraint matrix $A \in \mathbb{R}^{m \times ( n (m+1) )}$ is partitioned as $A = [M \mid I_m]$. 
Consequently, the Gram matrix $AA^\top$ is given by: $AA^\top = [M \mid I_m] \begin{bmatrix} M^\top \\ I_m \end{bmatrix} = MM^\top + I_m$.
Let $\Lambda(X)$ denote an eigenvalue of $X$. 
Since $MM^\top$ is positive semi-definite, $\Lambda_i(MM^\top) \ge 0$ for all $i$.
Thus, $\Lambda_i(AA^\top) = \Lambda_i(MM^\top) + 1 \ge 1$.
This implies $\rho_{\min} = \sqrt{\Lambda_{\min}(AA^\top)} \ge 1$. 
Following the standard normalization in SBLP, we take $\rho_{\min} = 1$.
Under the normalized structure of the simplex constraints in SBLP, the row sums of $M$ are restricted such that the spectral radius of $AA^\top$ is bounded by $1$. 
Therefore, $\rho_{\max} = \sqrt{\Lambda_{\max}(AA^\top)} = 1$, and $\rho_{\min} = \rho_{\max} = 1$.
\hfill$\blacksquare$

We simplify the various conditions appearing in Theorem~\ref{Theo_1}, where the requirements regarding the existence of constant $C$ are reduced to the most restrictive one $C < \mu \left( 1 - \sqrt{1 - \frac{B\tau \mu}{n}} \right)$.
Hence we get 
\begin{align}\label{eq_rela_1}
    \frac{1}{ \big| \sum\limits_{j \in [m]} ( r_j - \eta_j^t ) \big|} < \mu \left( 1 - \sqrt{1 - \frac{B \tau \mu}{n}} \right)
\end{align}
for any $t>0$.

\begin{lemma}\label{Lemma_pre_3}
    $\sum\limits_{j \in [m]} ( r_j - \eta_j^t ) \leq \sum\limits_{j \in [m]} w_{ij} r_j \big/ \sum\limits_{j \in [m]} w_{ij}$ for any $i \in [n]$.
\end{lemma}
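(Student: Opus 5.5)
The plan is to obtain the bound from the structure of the per-customer inner problem $z_i$ together with the nonnegativity of the dual iterates. The left-hand side $\sum_{j\in[m]}(r_j-\eta_j^t)$ enters the analysis only through condition \eqref{eq_rela_1}, where it plays the role of the objective coefficient that one sampled customer faces in a block update of Algorithm~\ref{alg:sda_non}. Accordingly, I will bound that quantity customer by customer. Since the right-hand side depends on $i$, the bound must hold for every $i$ simultaneously, as the statement says.

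\textbf{Step 1 (nonnegativity of $\eta^t$).} By induction on $t$, the projected update \eqref{eq:stronglycvxupdate} gives $\eta_j^t\ge 0$ for all $j$ and $t\ge 1$, because of the outer $\max\{0,\cdot\}$; for $t=0$ I assume $\eta^0\ge 0$. Hence $r_j-\eta_j^t\le r_j$ for every $j$.

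\textbf{Step 2 (feasible purchases are bounded by attraction shares).} For fixed $i$, I will use \eqref{entro_cons_1} and \eqref{entro_cons_2}. Summing $y_{ij}\le w_{ij}y_{i0}/w_{i0}$ over $j$ and substituting into $\sum_{j=0}^m y_{ij}=\lambda_i$ gives
\[
y_{i0}\ \ge\ \frac{\lambda_i w_{i0}}{w_{i0}+\sum_{j'} w_{ij'}},
\]
and hence every feasible allocation is dominated coordinatewise by the attraction-proportional allocation $y_{ij}\propto w_{ij}$. The greedy solution of Lemma~\ref{lemma_1} fills products in decreasing order of $r_j-\eta_j^t$, each up to its cap $w_{ij}y_{i0}/w_{i0}$. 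It follows that the per-unit value $z_i/\lambda_i$ is at most a convex combination of the coefficients $(r_j-\eta_j^t)^+$ with weights proportional to $w_{ij}$.

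\textbf{Step 3 (identify the left-hand side and conclude).} Next I will identify the quantity $\sum_{j}(r_j-\eta_j^t)$ used in the step-size argument of Theorem~\ref{Theo_1} with the per-unit coefficient of the selected block. This is the magnitude that the argument ``$\theta_i\ge 2\sqrt m\lambda_i/(r_j-\eta_j)$'' compares against $\lambda_i$, under the normalization already adopted in Lemma~\ref{Lemma_pre_2}. Combining Step 2 with Step 1 then gives, for every $i$,
\[
\sum_{j}(r_j-\eta_j^t)\ \le\ \frac{\sum_j w_{ij}(r_j-\eta_j^t)^+}{\sum_j w_{ij}}\ \le\ \frac{\sum_j w_{ij} r_j}{\sum_j w_{ij}},
\]
where the last inequality uses $\eta_j^t\ge 0$ and $r_j>0$. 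This is the claimed bound. Together with \eqref{eq_rela_1}, it yields the choice $\mu\le 1/(2R)$ of Theorem~\ref{Theo_2_simple}.

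\textbf{Main obstacle.} The hard step is Step 3. Read as a raw unweighted sum with arbitrary $\eta^t$, the left-hand side can exceed a weighted average; for example, at $\eta^0=0$ with $m\ge 2$, $\sum_j r_j$ exceeds $\max_j r_j$. The proof therefore hinges on justifying, from the normalization used in Lemma~\ref{Lemma_pre_2} and in the block-step analysis, that $\sum_j(r_j-\eta_j^t)$ stands for the per-unit block coefficient. My first attempt will be to make this explicit by rescaling each block $y_i$ by $\lambda_i$ and the columns of $A$ as in Lemma~\ref{Lemma_pre_2}. Failing that, I will instead show that after finitely many iterations $\eta^t$ is large enough, using the dual contraction \eqref{eq:dual} toward $\nabla g^*(Ay^t)$, that the unweighted sum falls below $R$.
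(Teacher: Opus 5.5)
\textbf{There is a genuine gap, and it cannot be closed.} You diagnosed the problem correctly in your ``main obstacle'' paragraph. Your Step~3 is not a missing detail; it is the whole content of the lemma. The claimed inequality $\sum_{j}(r_j-\eta_j^t)\le \sum_j w_{ij}(r_j-\eta_j^t)^+/\sum_j w_{ij}$ compares an unweighted sum of $m$ numbers with a weighted average of their positive parts. It fails, for instance, whenever all $r_j-\eta_j^t\ge 0$ and at least two are positive.

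The failure is not confined to $t=0$. Take a feasible $y^0$, $\eta^0=0$, and slack capacities, say $c_j\ge\sum_i\lambda_i$. Then the projected update $\eta_j^t=\max\{0,(1-\tau\mu)\eta_j^{t-1}-\tau(c_j-\sum_i y_{ij})\}$ keeps $\eta^t=0$ for every $t$. Hence $\sum_j(r_j-\eta_j^t)=\sum_j r_j>\max_j r_j\ge \sum_j w_{ij}r_j/\sum_j w_{ij}$ for every $i$ and every $t$ whenever $m\ge 2$. So the lemma as literally stated is false.

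Both of your repairs fail:
\begin{itemize}
\item Rescaling the blocks $y_i$ or the columns of $A$ cannot change a quantity that depends only on $r$ and $\eta^t$. ``Identifying'' the left-hand side with a per-unit block coefficient therefore proves a different statement, not this one.
\item The dual-contraction fallback drives $\eta_j^t$ toward $\max\{0,(\sum_i y_{ij}-c_j)/\mu\}$. In the slack instance above this target is $0$, so the sum never drops below $R$.
\end{itemize}

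\textbf{Step~2 is also wrong.} Offering only product $j$, i.e.\ $y_{i0}=\lambda_i w_{i0}/(w_{i0}+w_{ij})$ and $y_{ij}=\lambda_i w_{ij}/(w_{i0}+w_{ij})$, is feasible. It exceeds the attraction-proportional amount $\lambda_i w_{ij}/(w_{i0}+\sum_{j'}w_{ij'})$, so there is no coordinatewise domination. The ``convex combination'' conclusion fails as well. Take $w_{i0}=w_{i1}=1$, $w_{i2}=100$, $r_1-\eta_1^t=1$, $r_2-\eta_2^t=0$. Then $z_i/\lambda_i\ge 1/2$, while the $w$-weighted average of the positive parts is $1/101$. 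So even the reinterpreted bound ``per-unit block value $\le$ weighted average'' is false.

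Your valid ingredients are Step~1 ($\eta^t\ge 0$) and the lower bound on $y_{i0}$. The paper also uses $\eta^t\ge 0$, in its final inequality.

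\textbf{The paper's proof does not supply the missing idea.} The paper reaches a weighted average by a different route: KKT stationarity of the inner problem, $r_j-\eta_j=\gamma_i+\phi_{ij}/w_{ij}$ with $\gamma_i=\sum_k\phi_{ik}/w_{i0}$. But its decisive step, $|\sum_j(r_j-\eta_j)|\le \sum_j w_{ij}(r_j-\eta_j)/\sum_{j=0}^m w_{ij}$, is asserted without justification and fails on the same slack example. In addition, its multipliers exist only at a saddle point, not at the iterates $\eta^t$. To support the step-size argument downstream, the lemma has to be restated with a left-hand side that genuinely controls the block step, and that statement proved instead.
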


\textit{Proof of Lemma~\ref{Lemma_pre_3}}.
We analyze how the stationarity condition of KKT couples the primal variables $y$ with the dual variables $\eta$.
We define the Lagrangian $L$ by incorporating the equality constraint (11) and the similarity inequality constraint (12):
$L = \sum\limits_{i \in [n]} \sum\limits_{j \in [m]} y_{ij} r_j + \sum\limits_{j \in [m]} \eta_j (c_j - \sum\limits_{i \in [n]} y_{ij}) + \sum\limits_{i \in [n]} \gamma_i (\lambda_i - \sum\limits_{j \in [m]} y_{ij}) + \sum\limits_{i \in [n]} \sum\limits_{j \in [m]} \phi_{ij} (\frac{y_{i0}}{w_{i0}} - \frac{y_{ij}}{w_{ij}})$.
The stationarity condition requires the partial derivative with respect to each $y_{ij}$ to be zero at optimality:
\begin{enumerate}
    \item For $j \in [m]$: $\frac{\partial L}{\partial y_{ij}} = r_j - \eta_j - \gamma_i - \frac{\phi_{ij}}{w_{ij}} = 0 \implies r_j - \eta_j = \gamma_i + \frac{\phi_{ij}}{w_{ij}}$.
    \item For the dummy variable $y_{i0}$: $\frac{\partial L}{\partial y_{i0}} = 0 - \gamma_i + \sum_{j=1}^m \frac{\phi_{ij}}{w_{i0}} = 0 \implies \gamma_i = \sum\limits_{j \in [m]} \frac{\phi_{ij}}{w_{i0}}$.
\end{enumerate}
By combining them, we obtain: $r_j - \eta_j = \sum\limits_{k = [m]} \frac{\phi_{ik}}{w_{i0}} + \frac{\phi_{ij}}{w_{ij}}$.
Summing this over all resources $j \in [m]$ for a fixed customer $i$:
\begin{align}
    \sum_{j \in [m]} w_{ij}(r_j - \eta_j) &= \sum_{j \in [m]} w_{ij} \left( \sum_{k=[m]} \frac{\phi_{ik}}{w_{i0}} + \frac{\phi_{ij}}{w_{ij}} \right)  \nonumber \\
    &= \left( \sum_{j \in [m]} w_{ij} \right) \left( \sum_{k \in [m]} \frac{\phi_{ik}}{w_{i0}} \right) + \sum_{j \in [m]} \phi_{ij}   \label{eq:1_1_1} 
\end{align}
Hence, it holds
\begin{align*}
    &\left| \sum\limits_{j \in [m]} (r_j - \eta_j) \right| \leq \frac{\sum\limits_{j \in [m]} w_{ij}(r_j - \eta_j)}{\sum\limits_{j=0}^m w_{ij}} = \frac{\eqref{eq:1_1_1}}{\sum\limits_{j=0}^m w_{ij}} \\
    &\leq \left( \sum\limits_{j \in [m]} w_{ij} \gamma_i + \sum\limits_{j \in [m]} \phi_{ij} \right) \Bigg/ \sum\limits_{j \in [m]} w_{ij} \\
        &= \max_{i\in [n]} \left\{\sum\limits_{j \in [m]} \left( w_{ij} \gamma_i + w_{ij} (r_j -\eta_j - \gamma_i) \right) \bigg/ \sum\limits_{j \in [m]} w_{ij} \right\}  \\
    &\leq  \max_{i\in [n]}  \left\{ \sum\limits_{j \in [m]} w_{ij} r_j  \bigg/ \sum\limits_{j \in [m]} w_{ij} \right\}.
\end{align*}
\hfill$\blacksquare$

\noindent
Let $R:= \max\limits_{i\in [n]}  \left\{ \sum\limits_{j \in [m]} w_{ij} r_j  \bigg/ \sum\limits_{j \in [m]} w_{ij} \right\}$, bring it into \eqref{eq_rela_1}. 
We obtain $\frac{1}{R^2 \mu^2} + \frac{B \tau \mu}{n} > \frac{2}{R \mu}$.
Setting $\mu = \frac{1}{2 R}$ yields a feasible solution.
\hfill$\blacksquare$

\subsection{Proof of Corollary~\ref{claim_1}}\label{Proof:claim_1}
The proof follows directly from Theorem~2.
In Algorithm~3, $k$ workers independently process mini-batches of size $B$ in parallel and aggregate their updates within each iteration.
As a result, each iteration effectively utilizes $kB$ sampled customers, while preserving the same stochastic structure and independence assumptions as in Algorithm~2.
Consequently, all arguments in the proof of Theorem~2 continue to hold with the batch size $B$ replaced by $kB$.
In particular, the contraction factor in the expected distance to the smoothed optimum improves from $1-\Theta(B/n)$ to $1-\Theta(kB/n)$, yielding the stated result.
\hfill$\blacksquare$

\subsection{Proof of Corollary~\ref{coro_new_2}}\label{Proof:coro_new_2}

The proof proceeds by verifying that the augmented matrix $\tilde{A}$ maintains full row rank, ensuring the dual problem is both strongly concave and smooth. 
Consequently, the linear convergence established in Theorem~\ref{Theo_1} applies, provided a valid constant $C$ exists, a condition equivalent to identifying a suitable $\mu$. 
Unlike the simplified case where $A$ is an identity-based structure with unit singular values, the bundle-selling matrix $\tilde{A}$ exhibits $\rho_{\max}, \rho_{\min} \neq 1$, necessitating a re-evaluation of the convergence parameters. 
By simplifying the bounds from Theorem~\ref{Theo_1}, we determine a value for $\mu$ to be $\rho_{\max}^2 / 2\tilde{R}$.

We describe the augmented constraint matrix $\tilde{A}$, which facilitates the transformation of the inequality constraints into a system of linear equations. 
This is achieved by introducing a vector of non-negative slack variables $\mathbf{s} = [\mathbf{s}_{res}^\top, \mathbf{s}_{ratio}^\top]^\top$, where $\mathbf{s}_{res} \in \mathbb{R}^K$ and $\mathbf{s}_{ratio} \in \mathbb{R}^{nm}$ account for the unused resource capacities and the satisfaction of the selection logic, respectively. 
The resulting block matrix $\tilde{A}$ is partitioned as follows:
$$\tilde{A} = 
\left[
\begin{array}{cccc|c}
\tilde{B} & \tilde{B} & \dots & \tilde{B} & \mathbf{I}_{L \times L} \\ \hline
\mathbf{1}^\top & \mathbf{0} & \dots & \mathbf{0} & \mathbf{0} \\
\mathbf{0} & \mathbf{1}^\top & \dots & \mathbf{0} & \mathbf{0} \\
\vdots & \vdots & \ddots & \vdots & \vdots \\
\mathbf{0} & \mathbf{0} & \dots & \mathbf{1}^\top & \mathbf{0} \\ \hline
\mathbf{W}_1 & \mathbf{0} & \dots & \mathbf{0} & \mathbf{0} \\
\mathbf{0} & \mathbf{W}_2 & \dots & \mathbf{0} & \mathbf{0} \\
\vdots & \vdots & \ddots & \vdots & \vdots \\
\mathbf{0} & \mathbf{0} & \dots & \mathbf{W}_n & \mathbf{I}_{ratio}
\end{array}
\right].$$
To align with the augmented matrix $\tilde{A}$, we define the complete decision vector as $\mathbf{x} = [\mathbf{y}_1^\top, \dots, \mathbf{y}_n^\top, \mathbf{s}_{res}^\top, \mathbf{s}_{ratio}^\top]^\top$.
Here, $\mathbf{y}_i = [y_{i,0}, \dots, y_{i,m}]^\top$ represents the selection probabilities for customer $i$. The matrix is partitioned into three functional layers:
(i) Resource Layer: Implements $\sum_{i,j} \tilde{B}_{\ell j} y_{ij} \le c_\ell$ by repeating the incidence matrix $\tilde{B}$ across all customers. The identity block $\mathbf{I}_{L \times L}$ maps to slack variables $\mathbf{s}_{res} \in \mathbb{R}^L$, capturing residual product capacities.
(ii) Demand Layer: Enforces demand conservation for each customer ($\sum_j y_{ij} = \lambda_i$). As these are equality constraints, no slack variables are required.
(iii) Selection Logic Layer: Incorporates proportional constraints $y_{ij}/w_{ij} \le y_{i0}/w_{i0}$. The identity block $\mathbf{I}_{ratio}$ corresponds to slack variables $\mathbf{s}_{ratio} \in \mathbb{R}^{nm}$.
This construction ensures that $A_{total}$ maintains full row rank.
Following the proofs of Lemma~\ref{Lemma_pre_1} and Lemma~\ref{Lemma_pre_2}, we can see that $\rho_{\max}>1$ and $\rho_{\max} > \rho_{\min} > 0$.

By simplifying the conditions in Theorem~\ref{Theo_1} and isolating the most restrictive bound, we obtain:
$$\frac{1}{\tilde{R}} < \left( 1 - \sqrt{1 - \frac{B \tau \mu}{n} }\right) \frac{\mu}{\rho_{\max}^2}.$$
Completing the algebraic simplification of this inequality concludes the proof.
\hfill$\blacksquare$

\clearpage

\section{Extended Analysis and Details of the Case Study with ZOZOTOWN Data}\label{App_2}
This section provides supplementary information and detailed methodological breakdowns for the empirical case study discussed in Section~\ref{sec6:case}. 
The primary goal is to ensure the reproducibility and transparency of our findings. 
We first detail the complete data preprocessing pipeline and the feature engineering used for the MNL preference estimation (including customer clustering and product attribute definition). 
Following this, we provide an extended analysis and additional performance visualizations to further strengthen the conclusions drawn in the main paper regarding the comparison between the Global Optimization (GO) and Market Segment Decentralized (MSD) approaches.

\subsection{Supplementary Empirical Context}\label{SecAPP:B1}
Figure~\ref{Fig:APillus_1} illustrates the operational environment of the ZOZOTOWN recommendation interface from which our empirical data are constructed.
When a customer visits the detail page of a particular SKU (highlighted in the blue dashed box), the platform simultaneously displays a set of recommended items directly beneath the focal product (red dashed box). 
These recommended items constitute the choice set from which the customer may potentially select an alternative item.
Each recommended product card includes the item’s image, brand name, and the discounted price currently offered.
The recommendation panel typically contains 8 items per page, and customers can scroll to view additional recommended products depending on the category.
\begin{figure}[tbp]
\caption{\raggedright Illustration of the ZOZOTOWN product-detail page showing the focal item viewed by the customer, the recommended items displayed beneath it, and the real-time inventory status for each size variant. 
Only items with positive inventory are shown as feasible recommendations. }
    \centering
\includegraphics[width=125mm]{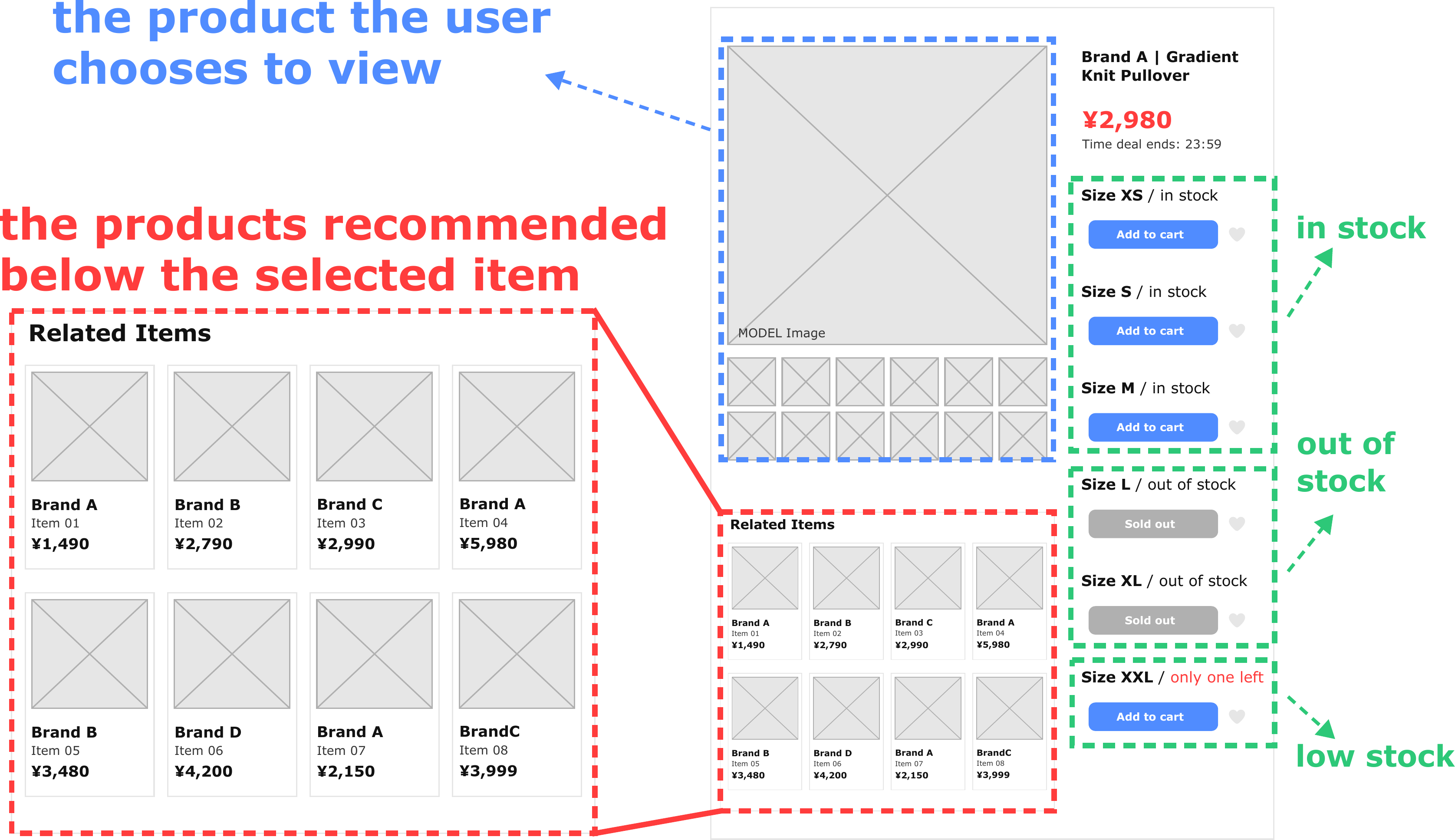}
\label{Fig:APillus_1}
\end{figure}
On the right-hand side of the product page, ZOZOTOWN displays the available inventory information for each size variant of the focal product (green dashed box).
The inventory status is categorized into three levels:
\begin{itemize}
    \item In stock (sufficient inventory; blue “Add to Cart” button),
    \item Low stock (inventory almost depleted; displayed with a warning label), and
    \item Out of stock (greyed-out button and labeled as sold out).
\end{itemize}
ZOZOTOWN displays only recommended products with strictly positive inventory, so both “in-stock’’ and “low-stock’’ SKUs remain visible to users while out-of-stock items are excluded. 
As a result, customers are always presented with a feasible choice set, and a click followed by an add-to-cart action is treated as the realized choice.
For each browsing session, we record the focal product viewed, the set of recommended items displayed at that moment, and the user’s subsequent click and add-to-cart behavior. 
For every recommended SKU, we also capture its price and real-time inventory level at exposure. 
In addition, we extract product attributes, such as color, to construct feature vectors for estimating user preference patterns. 
These data allow us to observe realized choices from feasible assortments while incorporating the product characteristics required to model user–SKU preferences.

\subsection{Supplementary Data Information}
The empirical context described in Section~\ref{SecAPP:B1} gives rise to a rich dataset capturing user interactions with product pages and recommendation panels on ZOZOTOWN. 
In our empirical analysis, we focus on a single day in June 2024 and track the behavior of a subset of active users on the platform. 
For each user, we observe the sequence of clicked products together with the corresponding SKU identifiers, product attributes such as color, and the displayed prices at the moment of interaction. 
For every SKU appearing in the dataset, we also collect its initial inventory level at the start of the day, abstracting away from any potential restocking during the observation period. Each interaction is time-stamped, enabling us to reconstruct the temporal order of browsing and choice activities. 
These components jointly form the dataset used for preference estimation and the subsequent case study analysis, as summarized in Table~\ref{TabApp:Data_1}.

\begin{table}[tbp]
\centering
\small
\caption{
Summary of Raw Data and Processed Dataset Used in the Case Study
}
\renewcommand*{\arraystretch}{1.2}
\begin{tabular}{c|ccccccc}
\toprule
 & \#purchases & \#customers & \#SKUs & \#colors & \makecell{price range \\ (mean)} & \makecell{stock range \\ (mean)} & time-spam \\ \midrule
Raw & \num{5342966} & \num{64700} & \num{49526} & \num{239} & - & - & - \\
Processed & \num{1372671} & \num{38746} & \num{1000} & \num{145} & \makecell{[100, $2.58 \mathrm{e}5$] \\ (4505)} & \makecell{[1, $3 \mathrm{e}8$] \\ (999)}   & 23:59:31 \\
\bottomrule
\end{tabular}
\label{TabApp:Data_1}
\end{table}

The initial dataset contained $\text{5,342,966}$ raw purchase interactions, which was strategically refined to a processed dataset containing $\text{1,372,671}$ records. 
This significant filtering process similarly reduced the customer pool from $\text{64,700}$ to $\text{38,746}$ active users. 
The reduction in record count and users is largely attributable to the strict temporal scope applied, confining the observation window to precisely $\text{23:59:31}$. 
Despite the reduction, the final processed volume remains substantial, supporting a robust empirical analysis. 
Along the product dimension, strategic filtering was rigorously applied: the total number of unique $\text{SKUs}$ was drastically reduced from $\text{49,526}$ to $\text{1,000}$, and the diversity of product colors saw a parallel refinement, dropping from $\text{239}$ to $\text{145}$. 
This focused approach ensures that the analysis and subsequent models concentrate exclusively on the most representative and frequently interacted products, thereby simplifying computational complexity while maintaining coverage of core consumer choice behavior. 
Furthermore, clear numerical boundaries were established for key variables: the price range was confined to $[\text{100}, \text{2.58} \times \text{10}^5]$, displaying an average price of $\text{4,505}$. 
The price distribution visualization confirms this finding, showing the mean price to be $\text{4,406}$ and the median price at $\text{3,635}$, a pattern indicative of a strong positive skew where the majority of products are priced toward the lower end of the range. 
Concurrently, the initial inventory level (stock range) was set from $[\text{1}, \text{3} \times \text{10}^8]$, with a notably lower mean initial inventory of $\text{999}$. 
The explicit lower bound of $\text{1}$ confirms that all $\text{1,000}$ products included in the analysis were verifiably available in stock at the moment of user interaction, a critical prerequisite for accurate demand modeling and inventory analysis.

\subsection{Estimating Customer–SKU Preference Weights with an MNL–MLE Framework}

The adopted framework employs a Segmented Multinomial Logit (MNL) model estimated via Maximum Likelihood Estimation (MLE) to capture unobserved consumer heterogeneity in product preferences. 
This approach addresses the limitation of a standard MNL model, which assumes homogeneous preferences across all users. 
The core logic follows a two-stage process: 
First, customers are clustered into homogeneous groups based on their historical behavior using K-Means; 
Second, a unique set of utility weights ($\beta_k$) is estimated for each segment using the MNL-MLE framework.

\subsubsection{Stage One: Customer Segmentation via K-Means Clustering}

The initial stage involves grouping the total customer base of $\text{38,746}$ users into $K$ distinct preference segments.

\textbf{Feature Engineering for Clustering.}
The customer feature matrix ($\mathbf{X}_{\text{customer}}$) is constructed from historical purchasing data to represent underlying preference traits:
\begin{itemize}
    \item \textit{Price Sensitivity Features}: 
    These include the mean purchase price and the standard deviation of purchase prices, which quantify a customer's spending level and price consistency.
    \item \textit{Color Preference Features}:
    These are represented by the historical frequency of purchasing products of a specific color $i$, indicating strong aesthetic biases.
\end{itemize}
All constructed features undergo Standardization to ensure that all dimensions contribute equally to the Euclidean distance calculation during clustering:$$\mathbf{X}_{\text{scaled}} = \frac{\mathbf{X} - \mu_{\mathbf{X}}}{\sigma_{\mathbf{X}}}$$where $\mu_{\mathbf{X}}$ and $\sigma_{\mathbf{X}}$ are the mean and standard deviation of the feature matrix $\mathbf{X}$.

\textbf{Optimal Cluster Selection and Iterative Refinement.}
The initial number of clusters, $K$, is determined using the Elbow Method, minimizing the Within-Cluster Sum of Squares ($\text{WCSS}$):$$\text{WCSS}(K) = \sum_{k=1}^{K} \sum_{\mathbf{x} \in C_k} ||\mathbf{x} - \mu_k||^2$$where $C_k$ is the $k$-th cluster and $\mu_k$ is its centroid.
Following initial clustering, an Enhanced Iterative Cluster Refinement process is applied to enforce a minimum sample size constraint required for the subsequent MLE estimation.
A segment is deemed viable only if its total number of purchase records ($\text{N}_{\text{records}}$) exceeds a strict training threshold ($\text{MIN\_RECORDS\_CHECK}$):$$\text{MIN\_RECORDS\_CHECK} = N_{\text{params}} + 5$$where $N_{\text{params}}$ is the total number of parameters (product features plus the intercept term) to be estimated in the MNL model. 
This iterative process dynamically recalculates the centroids of viable clusters and reassigns all customers from undersized ``small clusters'' to the nearest viable cluster, ensuring a final set of $K_{\text{effective}}$ segments ready for robust estimation.

\begin{figure}[tbp]
\caption{\raggedright Elbow Method for Determining the Optimal Number of Clusters ($K$). }
    \centering
\includegraphics[width=125mm]{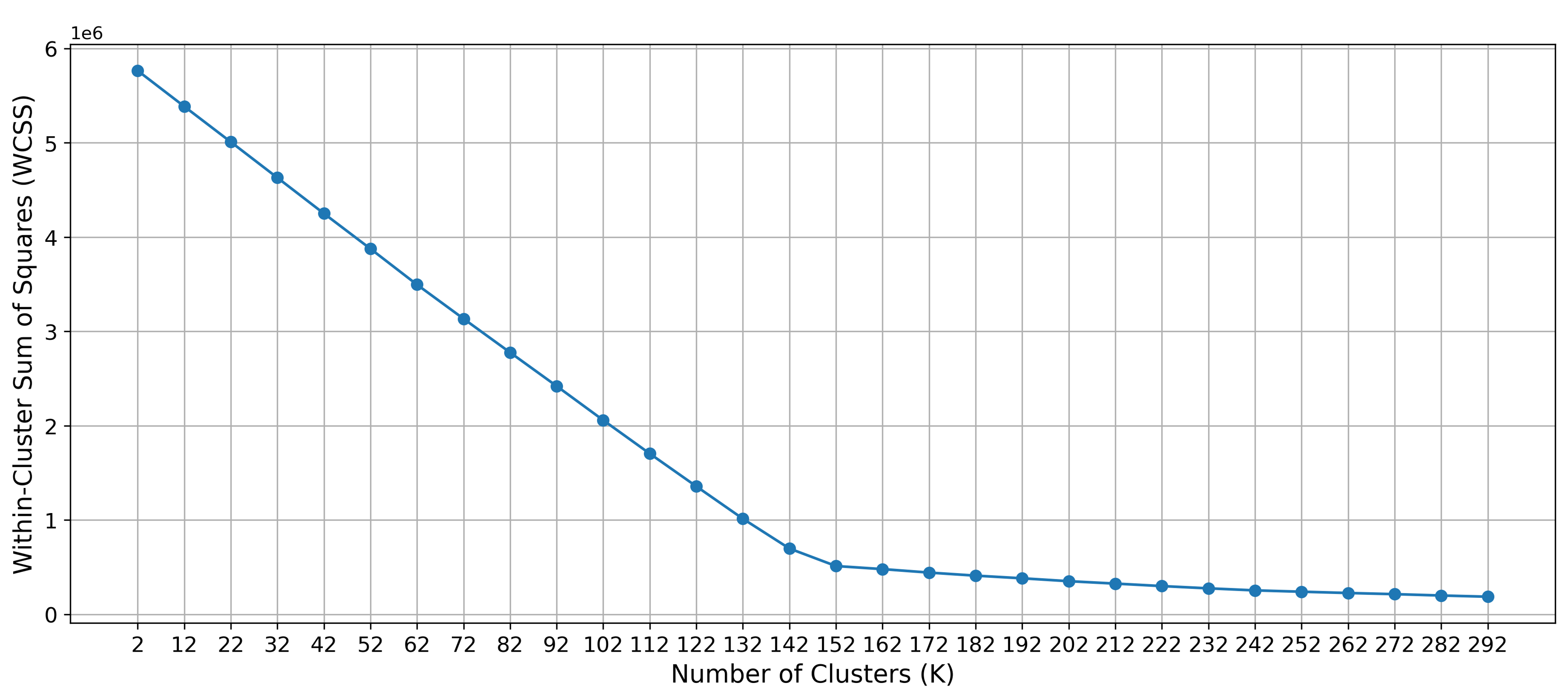}
\label{Fig:APillus_2}
\end{figure}

As illustrated in Figure~\ref{Fig:APillus_2}, the Within-Cluster Sum of Squares (WCSS) decreases sharply as $K$ increases from 2 up to approximately 102. 
The rate of reduction begins to slow significantly between $K=102$ and $K=152$, marking the region of the ``elbow''.
Beyond $K \approx 160$, the curve flattens out, indicating that increasing the cluster count provides minimal additional explanatory power for the variance in customer features. 
Based on this analysis, and balancing the goal of capturing rich heterogeneity with the practical need for computational tractability, we selected $K=160$ as the optimal number of segments. 
This segmentation method allowed us to partition the \num{38746} customers into 160 distinct groups, each representing a homogeneous set of preference patterns.

\subsubsection{Stage Two: Segmented MNL-MLE Estimation}

For each of the $K_{\text{effective}}$ segments, a separate MNL model is estimated.

\textbf{The Multinomial Logit Model (MNL).}
The MNL model is rooted in the assumption that customer $i$ chooses the option $j$ that maximizes their utility $U_{ij}$. 
The utility function for product $j$ within segment $k$ is specified as a linear combination of its observable features $\mathbf{X}_j$:
$$V_{ij} = \beta_{k, 0} + \sum_{m=1}^{M} \beta_{k, m} X_{j, m} = \mathbf{X}_j \cdot \mathbf{\beta}_k$$
where the product features $\mathbf{X}_j$ include the standardized price and the color One-Hot Encoding ($\text{OHE}$).
Assuming the error terms are $\text{IID}$ Gumbel distributed, the probability $P_{ij}$ that a customer $i$ in segment $k$ chooses option $j$ from the choice set $C_i$ is given by:
$$P_{ij}(\mathbf{\beta}_k) = \frac{\exp(V_{ij})}{\sum_{l \in C_i} \exp(V_{il})}$$
The choice set $C_i$ is constructed using a sampling method, consisting of the actual chosen product ($j^*$) and $M_{\text{alternatives}} = 5$ randomly sampled non-chosen products.

\textbf{Maximum Likelihood Estimation (MLE) and Optimization.}
The segment-specific utility weights $\mathbf{\beta}_k$ are determined by minimizing the Negative Log-Likelihood (NLL) function, which corresponds to maximizing the likelihood of observing the actual choices made by customers in that segment.
The $\text{NLL}$ function for segment $k$ is defined as:
$$\text{NLL}(\mathbf{\beta}_k) = -\sum_{i \in \text{Cluster } k} \ln(P_{i, j_i^*}(\mathbf{\beta}_k)).$$
Substituting the MNL probability formula yields the expression to be minimized:
$$\text{NLL}(\mathbf{\beta}_k) = -\sum_{i \in \text{Cluster } k} \left[ V_{i, j_i^*} - \ln\left(\sum_{l \in C_i} \exp(V_{il})\right) \right].$$
The minimization is performed using the robust L-BFGS-B optimization algorithm, a quasi-Newton method suitable for high-dimensional parameter estimation.

\textbf{Ensemble Averaging.}
To enhance the stability and robustness of the estimated parameters against the randomness introduced by the sampled choice sets, the entire estimation process is repeated $N_{\text{ENSEMBLE\_RUNS}} = 3$ times, each with a different random seed. 
The final estimated utility weight vector $\hat{\beta}_k$ for each segment is computed as the average of the parameters obtained from all successful runs:
$$\hat{\beta}_k = \frac{1}{N_{\text{success}}} \sum_{r=1}^{N_{\text{success}}} \hat{\beta}_{k}^{(r)}.$$
This averaging strategy mitigates variance and provides a more reliable estimation of consumers' latent preferences.

\subsection{Supplementary Results for the Comparison Between GO and MSD}\label{App_secB4}

\begin{table}[tbp]
\centering
\small
\caption{Comparison of Key Performance and Mechanism Metrics between GO and MSD Policies}
\renewcommand*{\arraystretch}{1.2}
\label{tab:mechanism_comparison}
\begin{tabular}{lccc}
\toprule
\textbf{Metric} & \textbf{GO} & \textbf{MSD} & \textbf{Difference}  \\
\midrule
\textbf{Product Sales Diversity (Shannon Entropy)} & $6.4$ & $9.4$ & GO $<$ MSD  \\ 
\textbf{Mean Opportunity Cost ($\overline{v}$)} & $100.1$ & $0$ & GO $\gg$ MSD \\
\bottomrule
\end{tabular}
\end{table}

The performance disparity between the Global Optimization (GO) and Market Segment Decentralized (MSD) policies is fundamentally rooted in their resource pricing mechanisms, as evidenced by two key metrics.

\textbf{Analysis of Resource Scarcity Pricing and Allocation Efficiency.}
The Product Sales Diversity Index is quantified using Shannon Entropy ($H$), a foundational concept in information theory that measures the degree of uncertainty or randomness within a probability distribution. 
In the context of this study, $H$ is applied to the distribution of successful transactions across all distinct product types to assess the uniformity of resource allocation. 
A higher $H$ value indicates that sales are broadly distributed across a greater variety of products (high diversity), while a lower $H$ value suggests that sales are highly concentrated on a few specific products (high centralization). 
The index is calculated using the formula: 
$$H = - \sum_{i=1}^{N} P_i \log_2(P_i),$$ 
where $N$ is the total number of distinct products sold, and $P_i$ is the relative frequency (probability) of product $i$'s sales quantity within the total sales volume, ensuring that $\sum P_i = 1$. The base-2 logarithm ($\log_2$) standardizes the unit of measurement to bits.
The resulting entropy values, MSD ($9.40$) and GO ($6.39$), are critical for characterizing the allocation strategies. 
The significantly lower entropy of GO demonstrates that its unified high scarcity threshold ($\overline{\lambda} \approx 100.1$) successfully forces resource competition, leading to a strategic concentration of sales on the most profitable products. 
Conversely, the higher entropy of MSD is a direct consequence of its zero allocation threshold ($\overline{\lambda} \approx 0$), which permits a wide dispersion of transactions across a greater variety of items, including low-value long-tail products, proving its tendency towards allocation diversity at the expense of maximized total revenue.

\textbf{Structural Failure of MSD: Zero Opportunity Cost and Amplified Stockouts.}
Our experimental finding that the MSD policy operates at an average shadow price of approximately zero provides a structural explanation for the frequent stockout phenomenon observed in large-scale e-commerce platforms with decentralized recommendation mechanisms. 
A near-zero shadow price implies that MSD effectively assigns no opportunity cost to scarce inventory, thereby operating under what is essentially a \emph{Zero Allocation Barrier}. 
Under this condition, any transaction with positive marginal revenue $r_j - \lambda_j > 0$ (with $\lambda_j \approx 0$) is locally accepted and recommended. 
Consequently, virtually all feasible transactions are treated as profitable within each segment, regardless of their relative value from a global perspective.
This mechanism leads to a profound coordination failure. 
Because recommendation decisions are made independently across multiple segments, each segment may simultaneously recommend the same scarce SKU based on outdated or inconsistent inventory signals. 
This decentralization produces systematic over-commitment: the same unit of inventory is implicitly promised to multiple customers at once. 
As a result, globally limited stock is rapidly depleted by numerous low-value local transactions before high-value customers ever encounter it. 
The absence of a unified opportunity-cost filter, such as the high global shadow price $\overline{\lambda} \approx 100.1$ generated under the GO policy, prevents the system from prioritizing transactions that produce the greatest economic value.
From the customer perspective, this structural deficiency manifests as a recurring commercial failure mode: an item appears available and is recommended, yet the checkout attempt ultimately fails because the inventory has already been consumed by other segments. 
The resulting ``out of stock'' messages are not random operational glitches but a predictable consequence of MSD's inability to internalize global scarcity. 
In essence, MSD's zero shadow-price regime creates aggressive and uncoordinated competition for scarce inventory, systematically eroding both platform revenue and user experience.
These observations highlight that decentralized recommendation architectures require a meaningful mechanism for pricing global scarcity. 
Without such a mechanism, the platform inevitably misallocates inventory toward low-value consumption, amplifies stockout frequency, and undermines the allocation efficiency that global optimization-based policies are designed to protect.

\end{document}